\newtheorem{theorem}{Theorem}
\newtheorem{lemma}[theorem]{Lemma}
\newtheorem{proposition}[theorem]{Proposition}
\newtheorem{conjecture}{Conjecture}
\newcommand\NN{{\mathbb N}}
\newcommand\ZZ{{\mathbb Z}}
\newcommand\RR{{\mathbb R}}
\begin{document}
\title{Four-coloring Eulerian triangulations of the torus}

\author{Marcin Bria\'nski\thanks{Theoretical Computer Science Department, Faculty of Mathematics and Computer Science, Jagiellonian University, Krak\'ow, Poland. E-mail: \texttt{marcin.brianski@doctoral.uj.edu.pl}.}\and
        Daniel Kr{\'a}l'\thanks{Faculty of Informatics, Masaryk University, Botanick\'a 68A, 602 00 Brno, Czech Republic. E-mail: {\tt \{dkral,540987\}@fi.muni.cz}. Supported by the MUNI Award in Science and Humanities (MUNI/I/1677/2018) of the Grant Agency of Masaryk University.}\and
        \newcounter{lth}
	\setcounter{lth}{1}
	Ander Lamaison\thanks{Institute for Basic Science, 55 Expo-ro, Yuseong-gu, 34126 Daejeon, South Korea. E-mail: {\tt ander@ibs.re.kr}. Previous affiliation: Faculty of Informatics, Masaryk University, Botanick\'a 68A, 602 00 Brno, Czech Republic. This author was also supported by the MUNI Award in Science and Humanities (MUNI/I/1677/2018).}\and
	Xichao Shu$^\fnsymbol{lth}$}

\date{}

\maketitle

\begin{abstract}
Hutchinson, Richter and Seymour [J. Combin. Theory Ser. B 84 (2002), 225--239] showed that
every Eulerian triangulation of an orientable surface that has a sufficiently high representativity is $4$-colorable.
We give an explicit bound on the representativity in the case of the torus by proving that
every Eulerian triangulation of the torus with representativity at least $10$ is $4$-colorable.
We also observe that the bound on the representativity cannot be decreased to less than $8$ as
there exists a non-$4$-colorable Eulerian triangulation of the torus with representativity $7$.
\end{abstract}

\section{Introduction}
\label{sec:intro}

Graph coloring is one of the most classical and oldest concepts in graph theory.
Recall that the \emph{chromatic number} of a graph $G$, denoted by $\chi(G)$, is the smallest $k$ such that
$G$ is \emph{$k$-colorable},
i.e., it is possible to assign $k$ colors to the vertices of $G$ so that any pair of adjacent vertices receive distinct colors;
graphs with chromatic number equal to $k$ are referred to as \emph{$k$-chromatic}.
The Four Color Theorem~\cite{AppH76,RobSST97} asserts that every planar graph is $4$-colorable.
It is well-known that every Eulerian triangulation of the plane, i.e., a triangulation with all degrees even, is $3$-colorable~\cite{Hea98}, and
in fact, a planar graph is $3$-colorable if and only if it is a subgraph of an Eulerian triangulation of the plane.

We are interested in graphs that can be embedded in surfaces of higher genera in a locally planar way defined further;
we refer the reader to the monograph~\cite{MohT01} for the comprehensive introduction to graphs embeddable in surfaces.
The \emph{representativity} of an embedded graph $G$, denoted by $r(G)$,
is the smallest number of intersections of a non-contractible curve with the embedding of $G$.
Thomassen~\cite{Tho93} proved that for every surface $\Sigma$,
there exists $r_\Sigma$ such that every graph embedded in $\Sigma$ with representativity at least $r_\Sigma$ is $5$-colorable;
this also follows from a more general (later) result also by Thomassen~\cite{Tho97} that
the number of $6$-critical graphs embeddable in any fixed surface is finite (recall that a graph $G$ is \emph{$k$-critical}
if $G$ is $k$-chromatic but every proper subgraph of $G$ is $(k-1)$-colorable).

A similar phenomenon as in the case of plane triangulations where Eulerian triangulation are $3$-colorable,
i.e., one less color is needed,
was conjectured by Collins and Hutchinson~\cite{ColH99}
to hold for locally planar triangulations of orientable surfaces of higher genera.
The conjecture of Collins and Hutchinson
was proven by Hutchinson, Richter and Seymour~\cite{HutRS02}
who showed that Eulerian triangulations of orientable surfaces that have large representativity are $4$-colorable.

\begin{theorem}[{Hutchinson, Richter and Seymour~\cite{HutRS02}}]
\label{thm:gen}
For every orientable surface $\Sigma$, there exists $r_\Sigma$ such that
every Eulerian triangulation of $\Sigma$ with representativity at least $r_\Sigma$ is $4$-colorable.
\end{theorem}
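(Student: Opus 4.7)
The plan is to reduce the problem to the planar case, where Eulerian triangulations are known by Heawood's theorem to be $3$-colorable, and to spend the fourth color to absorb the topological obstructions caused by the non-trivial genus of $\Sigma$.

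First, using the largeness of the representativity, I would extract a family of pairwise disjoint simple non-contractible cycles $C_1,\dots,C_k$ whose homotopy classes generate the fundamental group of $\Sigma$, with $k$ bounded in terms of the genus. High representativity guarantees not only the existence of such cycles but also that each of them sits inside a wide planar annular neighborhood, disjoint from the other such neighborhoods; cutting $\Sigma$ along all of the $C_i$ then yields a finite collection of planar regions. Each such region is (after appropriately triangulating along the cut boundaries) an Eulerian triangulation of a disk, so it admits a proper $3$-coloring by Heawood's theorem.

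The difficulty is that the $3$-colorings chosen on different planar pieces are a priori unrelated, and they need not agree across the cuts. Walking once around a cycle $C_i$ in a global attempt at $3$-coloring produces a \emph{monodromy} in $S_3$ acting on the three colors, and the $3$-coloring extends globally exactly when every such monodromy is trivial, which fails in general. The main step of the proof is therefore to use the fourth color to absorb these monodromies inside the wide planar annulus around each $C_i$. I would show that any element of $S_3$ can be realized as the monodromy of a proper $4$-coloring of a sufficiently wide Eulerian triangulated annulus whose two boundary cycles carry prescribed $3$-colorings that differ by the given permutation. Because the $C_i$ have pairwise disjoint wide neighborhoods, these local adjustments can be carried out independently and then glued to the $3$-colored planar pieces.

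The hardest step is this local construction of the transition annulus: one must design a proper $4$-coloring of an Eulerian triangulated annulus that interpolates between two $3$-colorings on the boundary cycles differing by a prescribed permutation of the three colors. I expect to handle this by realizing each generator of $S_3$ (a transposition of two colors) along a short strip of triangles crossing the annulus, using the fourth color as a buffer that prevents the swap from clashing with the third color, and concatenating such strips to realize an arbitrary element of $S_3$. Once this local problem is settled, gluing the pieces yields the desired global $4$-coloring, and the threshold $r_\Sigma$ is obtained by summing the widths required for the $C_i$ to live in disjoint wide annuli and for the transition strips to be implemented inside them.
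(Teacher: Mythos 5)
The paper does not actually supply a proof of Theorem~\ref{thm:gen}; it cites Hutchinson, Richter and Seymour~\cite{HutRS02} and only reproves the machinery that it needs for the torus (Proposition~\ref{prop:type}, Proposition~\ref{prop:cylinder}, and the proof of Theorem~\ref{thm:main}). Your proposal is nevertheless close in spirit to that machinery, so it is worth saying precisely where it matches and where it has a real gap.

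Your high-level plan --- cut along disjoint non-contractible cycles, $3$-color the planar remnants (using the fact that Eulerian planar triangulations are $3$-colorable), and then use the fourth color in wide annular collars to reconcile the colorings across the cuts --- is essentially the strategy of~\cite{HutRS02} and of Section~\ref{sec:main}. The ``monodromy in $S_3$'' you describe is exactly the \emph{type} homomorphism $\tau$ from free homotopy classes to $S_3$ defined after Proposition~\ref{prop:type}, and the transition you want to perform inside the wide annulus is exactly what the functions $f_0,\dots,f_4$ in Table~\ref{tab:fpi} accomplish, using four disjoint parallel cycles to create four bands that gradually deform the identity into the mismatch permutation $\pi$.

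There are, however, two concrete problems. First, a minor but real one: on any orientable surface, pairwise disjoint simple closed curves have pairwise zero algebraic intersection number, so their homology classes span an \emph{isotropic} subspace of $H_1(\Sigma)\cong\ZZ^{2g}$ of rank at most $g$; on the torus disjoint non-contractible simple curves are all parallel. Hence a system of pairwise disjoint curves can never generate $\pi_1(\Sigma)$ as you require. What one can do is cut along $g$ disjoint curves to obtain a planar surface; that suffices for your scheme, but the statement about generating the fundamental group should be dropped.

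The second problem is the substantive one. You write that each planar piece ``is (after appropriately triangulating along the cut boundaries) an Eulerian triangulation of a disk.'' This is exactly the point that does \emph{not} come for free: when you cut an Eulerian triangulation along a cycle $C$, the vertices on the two copies of $C$ acquire odd degree, and capping off the boundary circle with a disk triangulation so that all degrees become even is possible \emph{if and only if} the type $\tau([C])$ is the identity permutation. (This is precisely Proposition~\ref{prop:cylinder}; if $\tau([C])$ is a transposition or a $3$-cycle, the boundary cannot be completed to an Eulerian disk, and the cut-open piece need not be $3$-colorable at all.) Your proposal simply cuts along some system of non-contractible curves and assumes $3$-colorability; nothing forces the types of those curves to be trivial. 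The missing idea is the one that makes Theorem~\ref{thm:main} work: since $\tau$ factors through the abelian group $H_1(\Sigma)$, its image in $S_3$ is abelian, hence trivial, $\ZZ/2$, or $\ZZ/3$, and therefore has large kernel; one must \emph{choose} the cutting curves to lie in $\ker\tau$ (e.g.\ among $h_1$, $h_2$, $h_1+h_2$, $h_1-h_2$ in the torus case, one always does) before the Eulerian completion and the subsequent $3$-coloring are available. Without this step, the ``absorbing the monodromy with the fourth color'' argument has nothing correct to start from, because the individual pieces may already fail to be $3$-colorable. I would also note, as the paper does, that the actual proof in~\cite{HutRS02} relies on graph-minor machinery to locate the required system of cycles and consequently gives no explicit bound on $r_\Sigma$; your sketch does not address how the needed disjoint cycles with trivial type and wide collars are found at all.
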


Since it is not hard to construct $4$-chromatic Eulerian triangulations of the torus with large representativity,
the bound on the chromatic number given in Theorem~\ref{thm:gen} is the best possible.
Note that there are $5$-chromatic Eulerian triangulations of the projective plane with large representativity~\cite{HutRS02},
which follows from the characterization of the chromatic number of Eulerian triangulations of the projective plane by Mohar~\cite{Moh02}; in particular,
any Eulerian triangulation of the projective plane obtained by adding a vertex to each face of a $4$-chromatic quadrangulation of the projective plane is $5$-chromatic.
Hence, the condition that the surface $\Sigma$ is orientable cannot be omitted in Theorem~\ref{thm:gen}.
We remark that Nakamoto~\cite{Nak08} showed that
every $5$-chromatic Eulerian triangulation of a non-orientable surface with large representativity
contains an independent set whose removal results in an even-faced non-$3$-colorable graph.
We also remark that there are Eulerian triangulations of the Klein bottle that are $6$-chromatic,
however, every $6$-chromatic Eulerian triangulation of the Klein bottle contains $K_6$ as a subgraph~\cite{KraMNPS12}.

In their paper~\cite{HutRS02}, Hutchinson, Richter and Seymour did not provide any explicit bound on $r_{\Sigma}$ and
the bounds that follow from their proof would be enormous for surfaces with large genera as the arguments
are based on results from the graph minor project.
Our main result is a reasonably small bound on $r_\Sigma$ in Theorem~\ref{thm:gen} in the case when $\Sigma$ is the torus;
as discussed further, the bound of $10$ in the statement of the next theorem cannot be improved to less than $8$.

\begin{theorem}
\label{thm:main}
Every Eulerian triangulation of the torus with representativity at least $10$ is $4$-colorable.
\end{theorem}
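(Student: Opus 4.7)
The plan is to proceed by induction on the number of vertices: assume $G$ is a minimum counterexample, i.e., an Eulerian triangulation of the torus with $r(G) \ge 10$ that is not $4$-colorable and that minimizes $|V(G)|$. The argument then splits according to whether some vertex has degree $4$.

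First, I collect structural consequences. Euler's formula combined with $2|E(G)| = 3|F(G)|$ gives $|E(G)| = 3|V(G)|$ and hence average degree exactly $6$. Since $G$ is Eulerian and simple, every vertex degree is an even integer at least $4$, and the identity $\sum_v \deg(v) = 6|V(G)|$ forces every vertex to have degree $6$ as soon as there is no vertex of degree $4$.

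Second, I handle the case where some vertex $v$ has degree $4$. Writing $a, b, c, d$ for the cyclic neighbors of $v$, I would delete $v$ and identify $a$ with $c$, producing a smaller graph $G'$. A direct degree count shows that $G'$ is again an Eulerian triangulation of the torus. Using $r(G) \ge 10$, I would verify that $G'$ is simple and that $r(G') \ge 10$: any new multi-edge in $G'$ or any short non-contractible cycle in $G'$ lifts to a short non-contractible cycle in $G$ of length at most $2$ greater, contradicting the representativity bound on $G$. Minimality then yields a $4$-coloring of $G'$, which extends to $G$ since the neighbors of $v$ use at most three colors (with $a$ and $c$ now sharing a color, leaving a color free for $v$). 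If identifying $a$ with $c$ is infeasible due to a pre-existing common neighbor, I would identify $b$ with $d$ instead; the slack from $r(G) \ge 10$ guarantees at least one of the two options is valid.

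Third, the remaining case is that $G$ is $6$-regular. Such toroidal triangulations are exactly the quotients $T/\Lambda$ of the infinite triangular lattice $T$ by a rank-$2$ sublattice $\Lambda$ of translations, and the hypothesis $r(G) \ge 10$ forces $\Lambda$ to contain no short vectors. The vertices of $T$ admit a canonical proper $3$-coloring by the three cosets of an index-$3$ sublattice $T_0 \subset T$. If $\Lambda \subseteq T_0$, this $3$-coloring descends to $G$, contradicting the choice of $G$. Otherwise, I would exhibit an explicit $4$-coloring by a case analysis on the image of $\Lambda$ in $T/T_0 \cong \ZZ/3$, using the fourth color along a narrow annular band to repair the monodromy created by non-trivial cosets; the absence of short vectors in $\Lambda$ ensures that such a band fits inside $G$ without colliding with its translates.

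The hardest part is the degree-$4$ reduction: one must simultaneously control simplicity and representativity under the identification, and it is precisely this control that forces the threshold on $r(G)$. The exact value $10$ should emerge from the worst-case configurations in this local surgery, together with the width required by the $4$-coloring of $6$-regular quotients to avoid wrap-around collisions of the repair band. A secondary difficulty is the bookkeeping for degenerate configurations (e.g.\ when $a$ and $c$ coincide in the quotient, or when the link of $v$ wraps non-trivially around a handle), which likely requires enumerating a small number of exceptional local structures and dealing with each by hand.
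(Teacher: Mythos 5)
Your approach is fundamentally different from the paper's. The paper uses Schrijver's theory of disjoint homotopic cycles in toroidal graphs to find four vertex-disjoint cycles in a free homotopy class whose associated ``type'' permutation (defined via parities of degrees along the walk) is the identity, cuts along one of these cycles, completes the resulting cylinder graph to an Eulerian plane triangulation (which is $3$-colorable), and then uses the remaining three cycles as annular boundaries across which a fourth color is introduced to reconcile the two copies of the cut cycle. Nothing like a minimum counterexample, a degree-$4$ contraction, or the lattice classification of $6$-regular toroidal graphs appears in the paper.

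The proposal as written has a genuine gap in the degree-$4$ reduction step, and it is precisely the point you flag as delicate. When you delete $v$ and identify $a$ with $c$, a shortest non-contractible cycle $C'$ in $G'$ of length $\ell$ that passes through the merged vertex using one ``$a$-edge'' and one ``$c$-edge'' lifts in $G$ only to a path from $a$ to $c$ of length $\ell$, which must be closed up through $v$ (or through $b$ or $d$), giving a non-contractible cycle in $G$ of length $\ell+2$. Thus all you can conclude from $r(G)\ge 10$ is $r(G')\ge 8$, not $r(G')\ge 10$, so the induction on $|V(G)|$ with a fixed threshold of $10$ does not close. Your sentence ``I would verify that $G'$ is simple and that $r(G')\ge 10$'' asserts exactly the step that fails; to salvage this you would need a representativity-preserving surgery or an induction on a quantity that is allowed to degrade, and neither is supplied. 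Relatedly, simplicity of $G'$ is not automatic: if $a$ and $c$ share a neighbor $x\notin\{b,d\}$, the contraction creates a double edge, and ruling this out requires an argument about the contractible $4$-cycle $a\,v\,c\,x$ and the region it bounds, which is not given.

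The $6$-regular base case is also only a sketch, and it quietly skips the known obstructions: there exist $6$-regular toroidal triangulations that are $5$-chromatic (e.g.\ $K_7$, $\Gamma(\ZZ_{11},\{1,4,5\})$ and $\Gamma(\ZZ_{37},\{1,10,11\})$), so the claimed ``narrow annular repair band'' cannot work unconditionally; one would have to show that all such exceptions have representativity below $10$ and that the band construction succeeds for everything else. Finally, the value $10$ has no derivation in your scheme: in the paper it is extracted from the convex-geometry estimate $\lambda(X)\lambda(X^*)\le 4/3$ applied to the polytope $P(G)$ together with an explicit polygon analysis (Lemma~\ref{lm:polygon}), whereas in your scheme it would have to emerge from the worst-case representativity loss in the contraction and from the width needed by the repair band, neither of which is pinned down.
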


\noindent We remark that in the case of the torus,
combining proof techniques used in~\cite{HutRS02} with results of Schrijver from~\cite{Sch92,Sch93}
would likely give a two-digit bound in the case of the torus
although we have not computed the bound exactly (the exact value would also depend on the amount of fine tuning of the arguments).

\begin{figure}
\begin{center}
\epsfbox{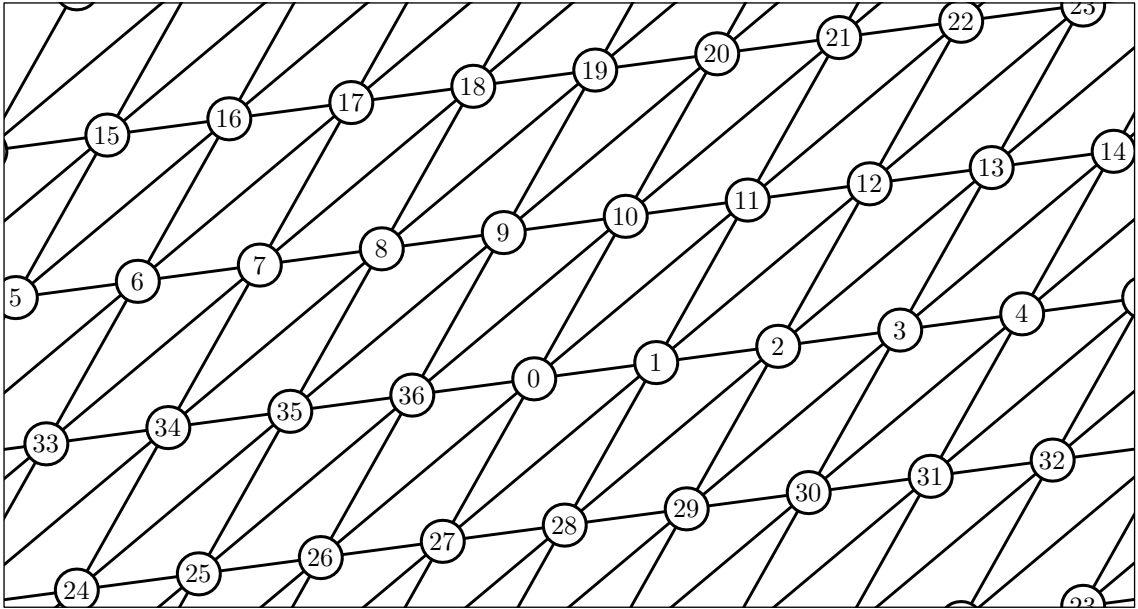}
\end{center}
\caption{The triangulation $\Gamma(\ZZ_{37},\{1,10,11\})$ of the torus.}
\label{fig:Z37}
\end{figure}

We now briefly overview the above mentioned lower bound of $8$ on $r_\Sigma$ from Theorem~\ref{thm:gen} in the case when $\Sigma$ is the torus
while deferring more detailed discussion to Section~\ref{sec:lower}.
Thomassen~\cite{Tho94} classified $6$-critical toroidal graphs and
derived as a corollary~\cite[Theorem 3.3]{Tho94} that
a $6$-regular triangulation of the torus is $5$-colorable
unless the triangulation is $K_7$ or the Cayley graph $\Gamma(\ZZ_{11},4,5)$.
Using Altshuler's classification of $6$-regular toroidal graphs~\cite{Alt73},
Yeh and Zhu~\cite{YehZ03} classified all $6$-regular toroidal graphs with chromatic number larger than three.
In particular, the $6$-regular Cayley graph $\Gamma(\ZZ_{37},\{1,10,11\})$, which is depicted in Figure~\ref{fig:Z37}, is $5$-chromatic.
As we discuss in Section~\ref{sec:lower},
the representativity of the (unique) Eulerian triangulation of the torus given by this graph is $7$ and
so the bound in Theorem~\ref{thm:main} cannot be decreased to less than $8$.

We next present the overview of the proof of Theorem~\ref{thm:main} and discuss how the paper is structured.
Similarly to~\cite{HutRS02},
we analyze cycles with specific free homotopies and extensions of their precolorings with three colors.
Rather than introducing necessary concepts from~\cite{HutRS02},
we decided to cast the results needed for our proof in the language of vertex-colorings and
included short proofs for completeness in Section~\ref{sec:main};
we would like to remark that extendibility of $3$-colorings to partial Eulerian triangulations of the plane
has been studied using various notations~\cite{DikKK02,Kro72,Kro73}, also see the survey~\cite{Ste93}.
In Section~\ref{sec:main}, we also prove our main result---Theorem~\ref{thm:main}.
To prove Theorem~\ref{thm:main},
we need to understand conditions on the existence of vertex-disjoint cycles with specific free homotopies.
In Section~\ref{sec:polygon}, we obtain such understanding using classical results of Schrijver~\cite{Sch92,Sch93}
on the existence of vertex-disjoint cycles with prescribed free homotopies in graphs embedded in the torus.
As in each of Section~\ref{sec:polygon} and Section~\ref{sec:main} we need concepts and notation specific for that section only,
we introduce these briefly at the beginning of each of the two sections.
In Section~\ref{sec:lower},
we show that the representativity of the (unique) Eulerian triangulation of the torus given by the Cayley graph $\Gamma(\ZZ_{37},\{1,10,11\})$ is $7$ and
we give a short proof that the graph is $5$-chromatic for completeness.
We conclude in Section~\ref{sec:concl} by discussing the limits of the techniques applied in Section~\ref{sec:polygon}.
We also conjecture that the bound on the representativity given by the Cayley graph $\Gamma(\ZZ_{37},\{1,10,11\})$ is tight,
i.e., the bound in Theorem~\ref{thm:main} can be improved to $8$.

\section{Disjoint cycles on the torus}
\label{sec:polygon}

We start with recalling results from~\cite{Sch92,Sch93}
on the existence of disjoint homotopic cycles in graphs embedded in the torus;
we refer to~\cite{Sch92} for particular results presented below.
We view the torus as $\RR^2/\ZZ^2$ and
let $C_{m,n}$ for $(m,n)\in\ZZ^2$ be the closed curve in the torus parameterized as $(m\cdot t\mod 1,n\cdot t\mod 1)$ for $t\in[0,1]$.
Every closed curve in the torus is freely homotopic to $C_{m,n}$ for some $(m,n)\in\ZZ^2$, see e.g.~\cite{Sti12}.

Fix a graph $G$ embedded in the torus for our presentation until Theorem~\ref{thm:polar}, and
let $c_G(m,n)$ be the minimum number of intersections of a closed curve freely homotopic to $C_{m,n}$ with the graph $G$.
Note that $c_G(0,0)=0$ and
the representativity of $G$ is the minimum value of $c_G(m,n)$ taken over all non-zero pairs $(m,n)\in\ZZ^2$.
Define a convex set $P(G)$ as
\[P(G)=\{(x,y)\in\RR^2\mbox{ such that }mx+ny\le c_G(m,n)\}.\]
Note that
the set $P(G)$ is \emph{symmetric} and bounded;
the former follows from that $c_G(m,n)=c_G(-m,-n)$ for all $(m,n)\in\ZZ^2$, and
the latter from finiteness of $c_G(0,1)=c_G(0,-1)$ and $c_G(1,0)=c_G(-1,0)$.
It can be shown~\cite[Section 4]{Sch92} that
\begin{equation}
\max\{mx+ny\mbox{ such that }(x,y)\in P(G)\}=c_G(m,n)
\label{eq:maxP}
\end{equation}
for any $(m,n)\in\ZZ^2$.
The following theorem of Schrijver~\cite{Sch92}
relates the existence of disjoint cycles with a given free homotopy to the set $P(G)$.

\begin{theorem}[{Schrijver~\cite[Theorem 2]{Sch92}}]
\label{thm:Sch}
Let $G$ be a graph embedded in the torus.
For any non-zero $(m,n)\in\ZZ^2$ such that $m$ and $n$ are coprime and any $k\in\NN$,
it holds that $(kn,-km)\in P(G)$ if and only if
the graph $G$ contains $k$ pairwise disjoint cycles freely homotopic to $C_{m,n}$.
\end{theorem}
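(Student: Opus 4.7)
The plan is to prove the two directions separately. The forward implication, that $k$ pairwise disjoint cycles freely homotopic to $C_{m,n}$ force $(kn,-km)\in P(G)$, follows from standard intersection-number considerations on the torus. Suppose $C_1,\dots,C_k$ are such cycles in $G$, and fix an arbitrary $(m',n')\in\ZZ^2$. The algebraic intersection number on the torus between the homotopy classes $(m,n)$ and $(m',n')$ equals $m'n-mn'$, so any closed curve $\gamma$ freely homotopic to $C_{m',n'}$ meets each $C_i$ in at least $|m'n-mn'|$ points. Choosing $\gamma$ to realize $c_G(m',n')$ and using that the $C_i$ are pairwise disjoint subsets of $G$ yields $c_G(m',n')\ge k|m'n-mn'|\ge m'(kn)+n'(-km)$, which is exactly the inequality defining membership of $(kn,-km)$ in $P(G)$.

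The reverse implication is the substantive half. I would lift $G$ to its universal cover, obtaining a $\ZZ^2$-periodic plane graph $\tilde G\subset\RR^2$. A cycle of $G$ freely homotopic to $C_{m,n}$ lifts to a $\ZZ^2$-invariant family of paths in $\tilde G$, each advancing by $(m,n)$ per period, and $k$ pairwise disjoint such cycles in $G$ correspond to $k$ such periodic path-families in $\tilde G$ with pairwise disjoint orbits. Using the duality formula~(\ref{eq:maxP}), the hypothesis $(kn,-km)\in P(G)$ translates into a lower bound of $k$ on the minimum number of times any separating curve in the infinite strip "transverse to the $(m,n)$-direction" meets $G$. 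This is a Menger/max-flow–min-cut type condition in a naturally associated planar graph, and the key structural input is that the corresponding constraint matrix is totally unimodular (this is where planarity of $\tilde G$ and the duality between cycles and cocycles enter), so a fractional solution guaranteed by the linear condition can be rounded to an integer packing of $k$ paths.

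The main obstacle, and the reason the argument is not merely a routine application of max-flow–min-cut, is turning the integer packing of paths in a strip into a family of genuinely \emph{vertex-disjoint} cycles that are each freely homotopic to $C_{m,n}$ (rather than, say, winding multiple times or collapsing to shorter homotopy classes). I expect to resolve this by an uncrossing argument: whenever two candidate cycles freely homotopic to $C_{m,n}$ share a vertex or cross in $\tilde G$, their lifts can be swapped along a shared vertex to produce two new cycles in the same homotopy class with strictly fewer crossings and no new intersections with the rest of the family; iterating terminates in the required pairwise disjoint family. The coprimality assumption on $m$ and $n$ enters here to guarantee that each cycle obtained is primitive, so it is freely homotopic to $C_{m,n}$ itself and not to some $C_{dm,dn}$ with $d>1$.
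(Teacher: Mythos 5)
The paper does not prove Theorem~\ref{thm:Sch}; it is quoted directly from Schrijver~\cite[Theorem~2]{Sch92} and used as a black box, so there is no in-paper argument to compare against. Your forward direction is fine and standard: the algebraic intersection number $m'n-n'm$ on the torus forces $c_G(m',n')\ge k(m'n-n'm)$ once $G$ carries $k$ disjoint cycles in class $C_{m,n}$, and that is exactly the inequality defining membership of $(kn,-km)$ in $P(G)$.

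The reverse direction, however, is a plan rather than a proof; it names the right general flavor (lift to the universal cover, polyhedral duality, uncrossing) but none of the three steps is carried out, and each conceals a genuine difficulty. The total-unimodularity claim is asserted, not established: homotopy constraints are global, so the constraint system does not obviously reduce to a network or interval matrix, and Schrijver's actual argument is a delicate polyhedral analysis rather than a one-line appeal to TU. The uncrossing step is also not as local as you describe: two periodic lifts in $\tilde G$ that cross do so $\ZZ^2$-periodically, so any swap must be performed equivariantly and shown to preserve the homotopy class, avoid creating new intersections with the other $k-2$ paths, and strictly decrease some finite potential -- none of which is spelled out, and naive vertex-swaps in an infinite periodic setting can easily cycle. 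Finally, coprimality of $m,n$ guarantees that $C_{m,n}$ is a \emph{simple} closed curve, but it does not by itself prevent the extracted cycles from landing in a different primitive class; that too must be argued from the structure of the packing. As written, the reverse implication should not be regarded as a proof of Schrijver's theorem.
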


Recall that the polar of the set $P(G)$ is the set $P(G)^*$ defined as
\[P(G)^*=\{(x',y')\in\RR^2\mbox{ such that }xx'+yy'\le 1\mbox{ for all }(x,y)\in P(G)\}.\]
If $X$ is a compact symmetric subset of $\RR^2$, we define $\lambda(X)$ as
\[\lambda(X)=\min\{t\ge 0\mbox{ such that } t\cdot X\mbox{ contains a non-zero integer point}\}.\]
It can be shown~\cite[Section 5]{Sch92} that $\lambda(P(G)^*)$ is equal to the representativity $r(G)$ of the graph $G$.
Since it holds for any compact symmetric subset of $\RR^2$ that $\lambda(X)\lambda(X^*)\le 4/3$~\cite[Theorem 4]{Sch92},
we obtain that $\lambda(P(G))^{-1}\ge 3r(G)/4$, which is equivalent to the following theorem.

\begin{theorem}
\label{thm:polar}
Let $G$ be a graph embedded in the torus.
There exists a non-zero $(a,b)\in\ZZ^2$ such that $\left(\frac{3r(G)}{4}\cdot a,\frac{3r(G)}{4}\cdot b\right)\in P(G)$.
\end{theorem}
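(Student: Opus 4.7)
The plan is simply to combine the two facts already recalled in the excerpt---namely $\lambda(P(G)^*)=r(G)$ from~\cite[Section~5]{Sch92} and $\lambda(X)\lambda(X^*)\le 4/3$ for every compact symmetric subset $X$ of $\RR^2$ from~\cite[Theorem~4]{Sch92}---and then to unwrap the resulting numerical bound into the geometric form demanded by the statement. First, I would observe that $P(G)$, being an intersection of closed half-spaces whose symmetry and boundedness are already noted above, is closed, bounded, convex, symmetric, and contains the origin; the bipolar theorem therefore gives $(P(G)^*)^* = P(G)$. Applying the second cited fact to $X:=P(G)^*$ then yields
\[r(G)\cdot\lambda(P(G)) \;=\; \lambda(P(G)^*)\cdot\lambda\bigl((P(G)^*)^*\bigr) \;\le\; \frac{4}{3},\]
so $\lambda(P(G))\le 4/(3r(G))$.

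Next, I would translate this inequality into the geometric statement. By the definition of $\lambda$, the dilate $\lambda(P(G))\cdot P(G)$ contains a non-zero integer point $(a,b)$, equivalently $\bigl(a/\lambda(P(G)),b/\lambda(P(G))\bigr)\in P(G)$. Since $P(G)$ is convex and contains the origin, it also contains every scalar multiple of this point by a factor in $[0,1]$. The factor $t:=\tfrac{3r(G)\lambda(P(G))}{4}$ lies in $[0,1]$ precisely because of the inequality just derived, and scaling by $t$ produces the point $\bigl(\tfrac{3r(G)}{4}a,\tfrac{3r(G)}{4}b\bigr)$, which therefore belongs to $P(G)$ as required.

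I do not anticipate a substantial obstacle: once the two results from~\cite{Sch92} are taken as given, the entire argument reduces to invoking the bipolar identity (which requires only compactness, convexity, symmetry, and containment of the origin) and performing the short convex-combination step above. The only conceptual point is to notice that the hypothesis $P(G)^{**}=P(G)$ is what lets the polar inequality of Schrijver, stated for an arbitrary compact symmetric set, be applied in the direction in which the representativity appears; after that the statement follows immediately.
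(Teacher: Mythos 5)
Your proof is correct and follows the same route as the paper: combine the identity $\lambda(P(G)^*)=r(G)$ with Schrijver's bound $\lambda(X)\lambda(X^*)\le 4/3$ and then unwind the definition of $\lambda$ using convexity and the presence of the origin. The appeal to the bipolar theorem is an unnecessary detour---applying Schrijver's bound directly to the compact symmetric set $X=P(G)$ already yields $\lambda(P(G))\,r(G)=\lambda(P(G))\,\lambda(P(G)^*)\le 4/3$, which is exactly the inequality you derive, with no need to identify $(P(G)^*)^*$ with $P(G)$.
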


To apply Theorems~\ref{thm:Sch} and~\ref{thm:polar},
we will need the following auxiliary lemma on the containment of certain points
in convex symmetric sets in the plane that contain certain other points.

\begin{lemma}
\label{lm:polygon}
Let $P$ be a convex symmetric set in the plane.
If $P$ contains the point $(0,7.5)$,
a point $(x,-10)$ for some $x\in\RR$,
a point $(10,-y)$ for some $y\in [0,5]$ and
a point $(t,10-t)$ for some $t\in\RR$,
then $P$ contains one of the following quadruples of points:
\begin{itemize}
\item the points $(0,4)$, $(4,-4)$, $(4,0)$ and $(4,4)$, or
\item the points $(0,-4)$, $(4,-8)$, $(4,-4)$ and $(4,0)$.
\end{itemize}
\end{lemma}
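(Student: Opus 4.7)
The plan is to strip the two quadruples down to a single unknown. Both quadruples contain $(4,-4)$ and $(4,0)$, while the points $(0,\pm 4)$ both lie on the segment from $(0,-7.5)$ to $(0,7.5)$ and are therefore automatically in $P$. For $(4,0)$ and $(4,-4)$: the parallelogram with vertices $(0,\pm 7.5)$ and $(\pm 10,\mp y)$ is contained in $P$ (the second pair by hypothesis and the first pair by the symmetry of $P$), and its intersection with the line $x=4$ is the vertical segment $\{(4,v) : -4.5-0.4y \le v \le 4.5-0.4y\}$; since $y \in [0,5]$, this contains both $(4,0)$ and $(4,-4)$. Thus it suffices to exhibit some $(4,v) \in P$ with $v \ge 4$ or with $v \le -8$: by convexity with $(4,0)$, the former yields $(4,4) \in P$ (completing the first quadruple), and the latter yields $(4,-8) \in P$ (completing the second).

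I will then split on the value of $t$ with breakpoints at $t=-4,0,4,20$. In each sub-range I pick two points already in $P$ (a hypothesis point or its reflection through the origin, which is in $P$ by symmetry) and compute where the joining segment meets $x=4$. Four of the five sub-ranges are routine. If $t\ge 20$, the segment from $(0,-7.5)$ to $(t,10-t)$ meets $x=4$ at $v=-11.5+70/t \le -8$. If $4\le t\le 20$, the segment from $(0,7.5)$ to $(t,10-t)$ meets $x=4$ at $v=3.5+10/t \ge 4$. If $0\le t\le 4$, the segment from $(t,10-t)$ to $(10,-y)$ meets $x=4$ at $v=6-y(4-t)/(10-t)$, and the bounds $(4-t)/(10-t)\le 2/5$ and $y\le 5$ give $v \ge 4$. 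If $t \le -4$, the segment from $(-t,t-10)$ to $(-10,y)$ meets $x=4$ at $v=-14+y(t+4)/(t-10)$, and $(t+4)/(t-10) \in [0,1)$ together with $y\le 5$ give $v \le -9$.

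The main obstacle is the remaining middle case $-4 \le t \le 0$, where neither of the two natural segments succeeds for every $y \in [0,5]$. The segment from $(t,10-t)$ to $(10,-y)$ gives $v \ge 4$ only when $y \le 2+12/(4-t)$, while the segment from $(-t,t-10)$ to $(10,-y)$ gives $v \le -8$ only when $y \ge (14t+20)/(4+t)$. I will therefore verify that these two conditions on $y$ jointly cover $[0,5]$, which reduces to the inequality $2+12/(4-t) \ge (14t+20)/(4+t)$. Clearing the positive denominators $4\pm t$, this becomes $12t(t-2) \ge 0$, which holds for all $t \le 0$, closing the final case.
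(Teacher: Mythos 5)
Your proof is correct, and it takes a genuinely different—and cleaner—route than the paper's. The paper works with all four hypothesis points and whittles down the parameter ranges step by step: it first reduces to $x\in[0,5/2]$ (via three subcases on $x$), then to $t\le 0$ (three subcases on $t$), then to $y\in[2,5]$, then to $t\in[-1,0]$, and finally closes with a two-sided comparison. Your argument, by contrast, never touches the hypothesis that $(x,-10)\in P$; you prove the strictly stronger statement that $(0,7.5)$, $(10,-y)$ with $y\in[0,5]$, and $(t,10-t)$ alone already force one of the two quadruples. Two structural choices make this possible: the observation that it suffices to hit the vertical line $x=4$ at height $\ge 4$ or $\le -8$ (convexity with $(4,0)$ then delivers $(4,4)$ or $(4,-8)$ exactly), and the decision to case only on $t$ with breakpoints $-4,0,4,20$, of which just the middle interval $[-4,0]$ requires the extra argument over $y$, closing with the clean factorization $12t(t-2)\ge 0$. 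The paper's approach is easier to follow pictorially (each subcase is a single triangle drawn on a grid), while yours is more economical both in hypotheses and in case count, and it makes explicit which hypothesis is actually doing the work. Both are valid; your version would arguably have been a better write-up of the lemma.
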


\begin{proof}
Fix $y\in [0,5]$, $x\in\RR$ and $t\in\RR$.
Note that the set $P$ contains the following eight points:
$(0,7.5)$, $(0,-7.5)$, $(x,-10)$, $(-x,10)$, $(10,-y)$, $(-10,y)$, $(t,10-t)$ and $(-t,t-10)$.
We will be visualizing the points contained in the set $P$ on a square unit grid,
which will always contain the points $(-10,-10)$ and $(10,10)$;
we refer to Figure~\ref{fig:poly1} for an example.
The points $(0,4)$ and $(0,-4)$ are convex combinations of the points $(0,-7.5)$ and $(0,7.5)$ and
so they are contained in $P$.
The points $(4,-4)$ and $(4,0)$
are contained in the triangle with the vertices $(0,-7.5)$, $(0,7.5)$ and $(10,-y)$ for any $y\in [0,5]$ (see Figure~\ref{fig:poly1} for illustration) and
so they both are also contained in $P$.
Hence, we need to show that the set $P$ contains the point $(4,-8)$ or the point $(4,4)$.
To do so, we will be successively restricting the values of $x$, $y$ and $t$;
to present our arguments as simply as possible,
we will restrict ranges of not yet analyzed values of $x$, $y$ and $t$ to closed intervals and
visualize our arguments in Figures~\ref{fig:poly1}--\ref{fig:poly10}.

\begin{figure}
\begin{center}
\epsfbox{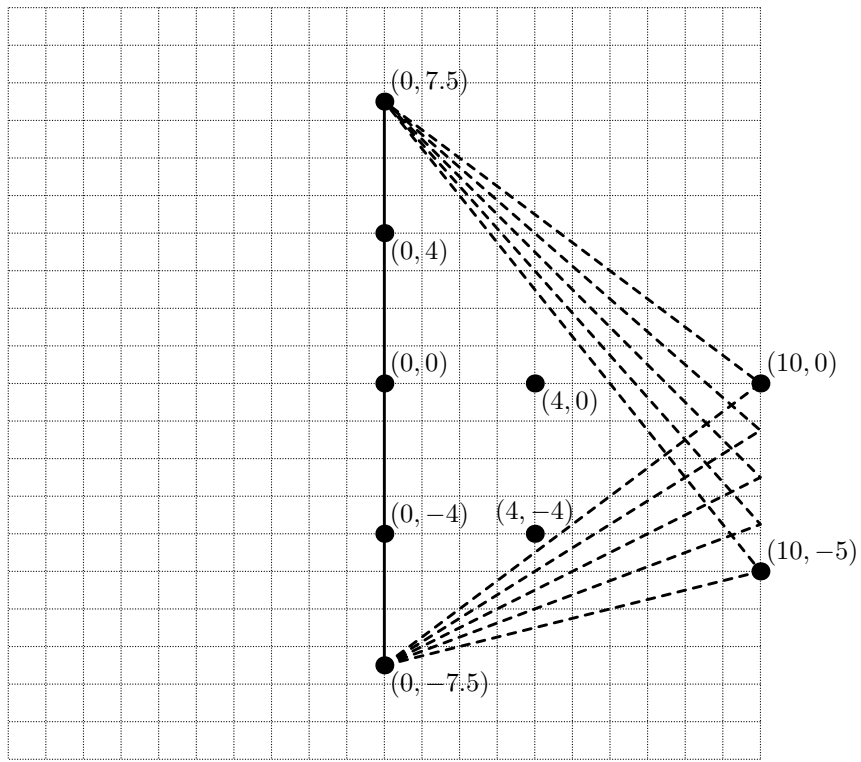}
\end{center}
\caption{The illustration of the containment of the points $(0,4)$, $(0,-4)$, $(4,-4)$ and $(4,0)$
         in the polytope $P$ in the proof of Lemma~\ref{lm:polygon}.}
\label{fig:poly1}
\end{figure}

If $x\le 0$, then the point $(4,4)$ is in the set $P$ as
it is contained in the quadrilateral with the vertices $(0,0)$, $(0,7.5)$, $(-x,10)$ and $(10,-y)$ (see Figure~\ref{fig:poly2} for illustration).
If $x\ge 20$, then the point $(4,4)$ is in the set $P$ as
it is contained in the triangle with the vertices $(0,0)$, $(0,7.5)$ and $(x,-10)$ (note that the points
$(0,7.5)$, $(4,4)$ and $(20,-10)$ lie on the same line as illustrated in Figure~\ref{fig:poly3}).
Finally, if $x\in\left[\frac{20-4y}{8-y},20\right]$,
then the point $(4,-8)$ is contained in the triangle with the vertices $(0,-7.5)$, $(10,-y)$ and $(x,-10)$ and
so it is also contained in $P$ (see Figure~\ref{fig:poly4} for illustration).
Note that we have used that $x\le 20$ as the points $(0,-7.5)$, $(4,-8)$ and $(20,-10)$ lie on the same line (see Figure~\ref{fig:poly3}).
Hence, we can assume that $x\in\left[0,\frac{20-4y}{8-y}\right]$ in the rest of the proof.
In particular, $x\in [0,5/2]$.

\begin{figure}
\begin{center}
\epsfbox{torus10-3.mps}
\end{center}
\caption{The illustration of the containment of the point $(4,4)$ in the polytope $P$
         in the proof of Lemma~\ref{lm:polygon} when $x\le 0$.}
\label{fig:poly2}
\end{figure}

\begin{figure}
\begin{center}
\epsfbox{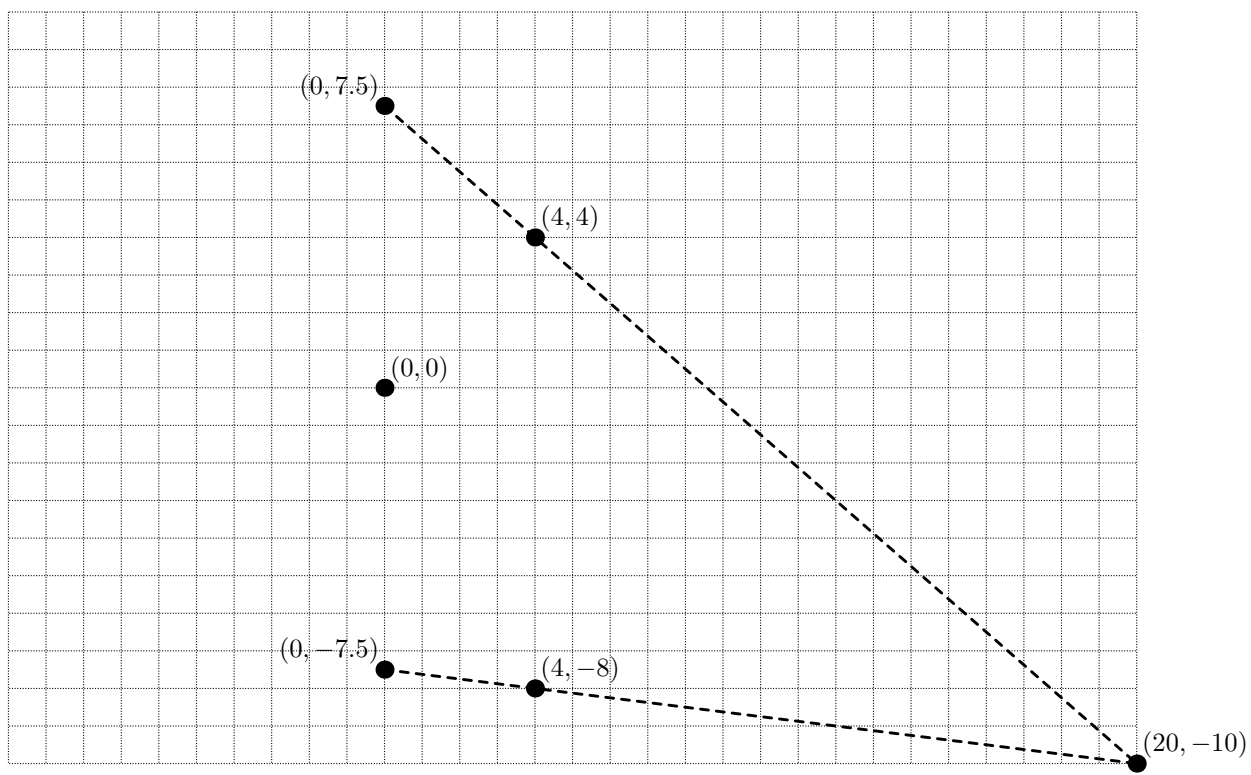}
\end{center}
\caption{The illustration of the mutual position of some of the points in the proof of Lemma~\ref{lm:polygon}.}
\label{fig:poly3}
\end{figure}

\begin{figure}
\begin{center}
\epsfbox{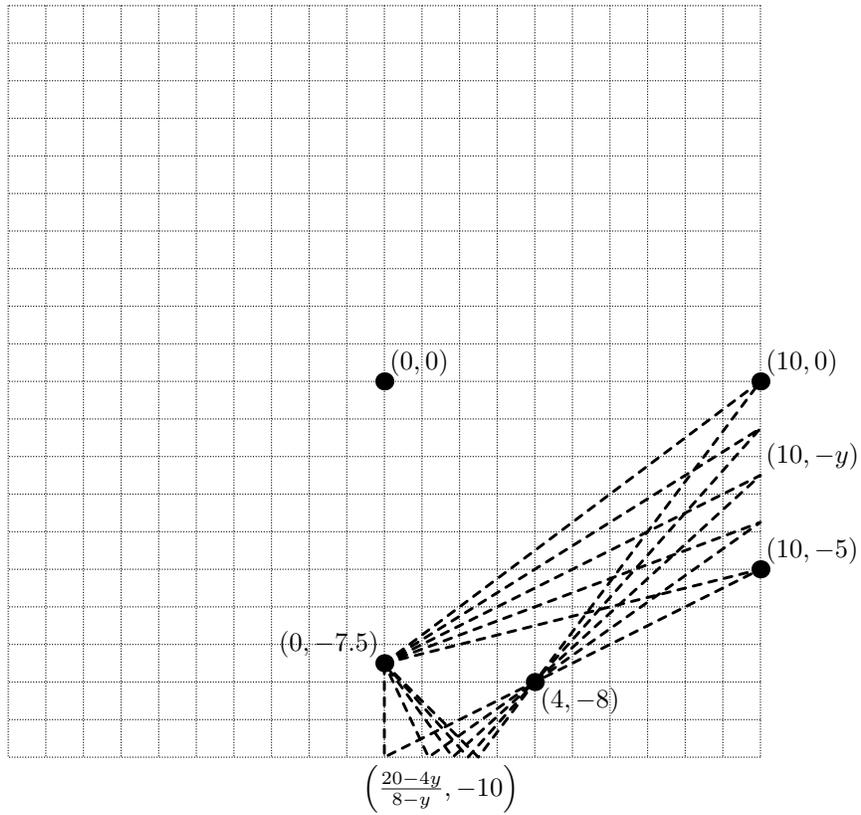}
\end{center}
\caption{The illustration of the containment of the point $(4,-8)$ in the polytope $P$
         in the proof of Lemma~\ref{lm:polygon} when $x\in\left[\frac{20-4y}{8-y},20\right]$.}
\label{fig:poly4}
\end{figure}

We now consider the point $(t,10-t)$.
If $t\ge 40/3$ (see Figure~\ref{fig:poly5} for illustration),
then the point $(4,-8)$ is contained in the triangle with the vertices $(0,0)$, $(x,-10)$ and $(t,10-t)$ (note that
the points $(0,-10)$, $(4,-8)$ and $(40/3,-10/3)$ lie on the same line) and
so the point $(4,-8)$ is contained in $P$.
If $t\in [5,30]$ (see Figure~\ref{fig:poly6} for illustration),
then the point $(4,4)$ is contained in the triangle with the vertices $(-x,10)$, $(0,0)$ and $(t,10-t)$ (note that
the points $(0,0)$, $(4,4)$ and $(5,5)$ lie on the same line and
the points $(-2.5,10)$, $(4,4)$ and $(30,-20)$ lie on the same line) and
so the point $(4,4)$ is contained in $P$.
Finally, if $t\in [0,5]$ (see Figure~\ref{fig:poly7} for illustration),
then the point $(4,4)$ is contained in the triangle with the vertices $(0,0)$, $(t,10-t)$ and $(10,-y)$ (note that
the points $(0,0)$, $(4,4)$ and $(5,5)$ lie on the same line and
the points $(0,10)$, $(4,4)$ and $(10,-5)$ lie on the same line) and
so the point $(4,4)$ is contained in $P$.
Hence, we can assume that $t\le 0$ in the rest of the proof.

\begin{figure}
\begin{center}
\epsfbox{torus10-5.mps}
\end{center}
\caption{The illustration of the containment of the point $(4,-8)$ in the polytope $P$
         in the proof of Lemma~\ref{lm:polygon} when $t\ge 40/3$.}
\label{fig:poly5}
\end{figure}

\begin{figure}
\begin{center}
\epsfbox{torus10-6.mps}
\end{center}
\caption{The illustration of the containment of the point $(4,4)$ in the polytope $P$
         in the proof of Lemma~\ref{lm:polygon} when $t\in [5,30]$.}
\label{fig:poly6}
\end{figure}

\begin{figure}
\begin{center}
\epsfbox{torus10-7.mps}
\end{center}
\caption{The illustration of the containment of the point $(4,4)$ in the polytope $P$
         in the proof of Lemma~\ref{lm:polygon} when $t\in [0,5]$.}
\label{fig:poly7}
\end{figure}

\begin{figure}
\begin{center}
\epsfbox{torus10-8.mps}
\end{center}
\caption{The illustration of the containment of the point $(4,4)$ in the polytope $P$
         in the proof of Lemma~\ref{lm:polygon} when $t\le 0$ and $y\in [0,2]$.}
\label{fig:poly8}
\end{figure}

Since the line passing through the points $(4,4)$ and $(10,-2)$ is parallel to the line $(z,10-z)$, $z\in\RR$,
the triangle with the vertices $(0,0)$, $(t,10-t)$ and $(10,-y)$ contains the point $(4,4)$
whenever $y\le 2$ (note that we use that $t\le 0$ here); see Figure~\ref{fig:poly8} for illustration.
Hence, if $y\in [0,2]$, the polytope $P$ contains the point $(4,4)$, and
we can assume in the rest that $y\in [2,5]$.
If $t\le -1$,
then the point $(4,-8)$ is contained in the triangle with the vertices $(0,-7.5)$, $(-t,t-10)$ and $(10,-y)$ (note that
the points $(1,-11)$, $(4,-8)$ and $(10,-2)$ lie on the same line) and
so the point $(4,-8)$ is contained in $P$; see Figure~\ref{fig:poly9} for illustration.
Hence,
we can assume that $t\in [-1,0]$ in the rest of the proof,
in addition to that $y\in [2,5]$.

\begin{figure}
\begin{center}
\epsfbox{torus10-9.mps}
\end{center}
\caption{The illustration of the containment of the point $(4,-8)$ in the polytope $P$
         in the proof of Lemma~\ref{lm:polygon} when $t\le -1$.}
\label{fig:poly9}
\end{figure}

\begin{figure}
\begin{center}
\epsfbox{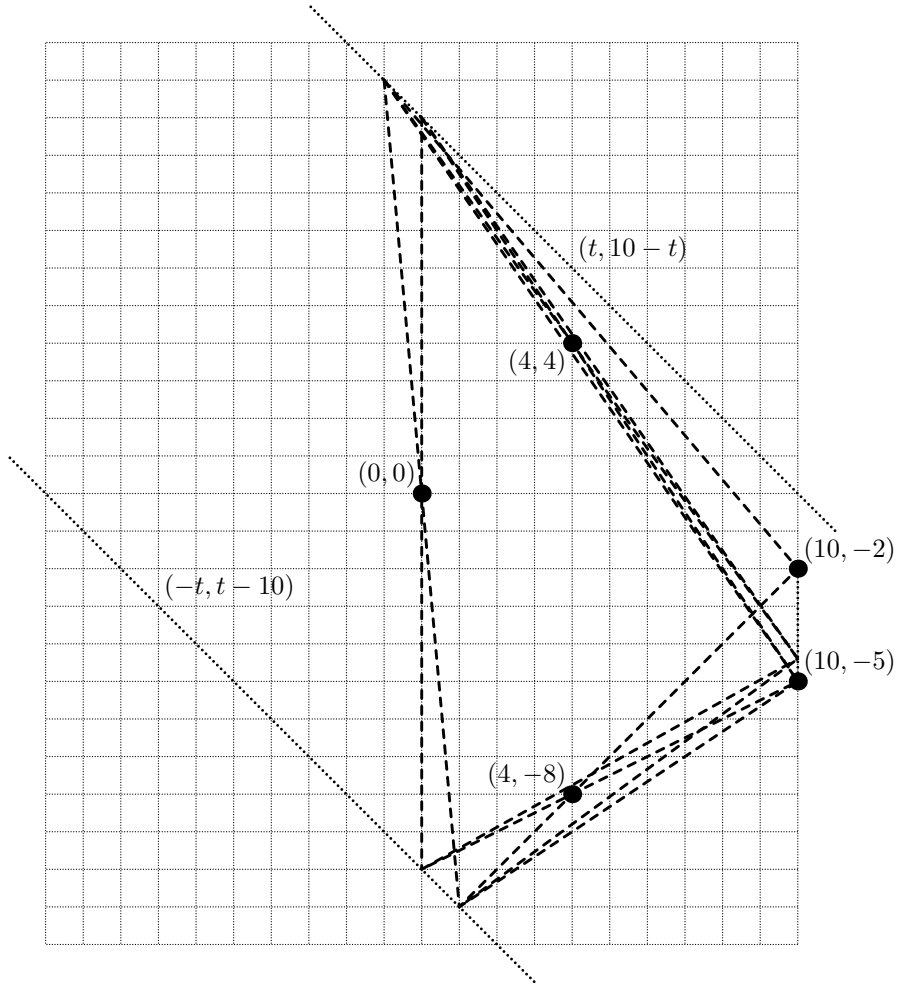}
\end{center}
\caption{The illustration of the containment of one of the points $(4,-8)$ and $(4,4)$ in the polytope $P$
         in the proof of Lemma~\ref{lm:polygon} when $t\in [-1,0]$ and $y\in [2,5]$.}
\label{fig:poly10}
\end{figure}

We next refer to figure~\ref{fig:poly10} for illustration.
Since the point $(10,-5-3t)$ lies on the line containing the points $(-t,t-10)$ and $(4,-8)$,
the triangle with the vertices $(0,0)$, $(-t,t-10)$ and $(10,-y)$ contains the point $(4,-8)$ if $y\ge 5+3t$.
Since the point $\left(10,\frac{2t-20}{4-t}\right)$ lies on the line containing the points $(t,10-t)$ and $(4,4)$,
the triangle with the vertices $(0,0)$, $(t,10-t)$ and $(10,-y)$ contains the point $(4,4)$ if $y\le\frac{20-2t}{4-t}$.
Since it holds that $5+3t\le\frac{20-2t}{4-t}$ for all $t\in [-1,0]$ (the inequality actually holds for all non-positive $t$),
it follows that the polytope $P$ contains the point $(4,-8)$ or the point $(4,4)$ for any $t\in [-1,0]$ and any $y\in [2,5]$.
This concludes the proof of the lemma.
\end{proof}

We are now ready to prove the main theorem of this section.

\begin{theorem}
\label{thm:polygon}
Let $G$ be a graph embedded in the torus with representativity at least $10$.
There exist two free homotopy classes $h_1$ and $h_2$ such that each of the following holds:
\begin{itemize}
\item the graph $G$ contains four disjoint cycles each from $h_1$,
\item the graph $G$ contains four disjoint cycles each from $h_2$,
\item the graph $G$ contains four disjoint cycles each from $h_1+h_2$, and
\item the graph $G$ contains four disjoint cycles each from $h_1-h_2$.
\end{itemize}
\end{theorem}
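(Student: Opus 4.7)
The plan is to combine Theorem~\ref{thm:polar} with the assumption $r(G)\ge 10$ to normalize the polytope $P(G)$, verify the hypotheses of Lemma~\ref{lm:polygon} after possibly applying further unimodular transformations, and translate the resulting quadruples of points in $P(G)$ into disjoint cycles via Theorem~\ref{thm:Sch}.

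First, Theorem~\ref{thm:polar} supplies a non-zero integer vector $(a,b)$ with $\bigl(\tfrac{3r(G)}{4}a,\tfrac{3r(G)}{4}b\bigr)\in P(G)$. Scaling this point toward $(0,0)\in P(G)$ using convexity (by the factor $10/r(G)\in(0,1]$), and dividing $(a,b)$ by $\gcd(a,b)$ if needed, we obtain a primitive $(a,b)\in\ZZ^2$ with $(7.5a,7.5b)\in P(G)$. Applying a self-homeomorphism of the torus corresponding to a unimodular transformation of $\ZZ^2$ (which preserves representativity and transforms the conclusion of the theorem consistently, since we are free to choose any two homotopy classes), we may assume $(a,b)=(0,1)$, so that $(0,7.5)$ and, by central symmetry of $P(G)$, $(0,-7.5)$ belong to $P(G)$.

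Since $r(G)\ge 10$, equation~\eqref{eq:maxP} yields $c_G(m,n)\ge 10$ for every non-zero $(m,n)\in\ZZ^2$; applying this for $(m,n)=(0,-1),(1,0),(1,1)$ and scaling the resulting points toward the origin produces points of the form $(x_1,-10)$, $(10,\tilde y)$, and $(t,10-t)$ in $P(G)$ for some $x_1,\tilde y,t\in\RR$. The first, second, and fourth hypotheses of Lemma~\ref{lm:polygon} are thus secured, while the third requires $\tilde y\in[-5,0]$. To arrange this, we compose with additional unimodular transformations. The shear $(m,n)\mapsto(m+kn,n)$ acts dually as $(x,y)\mapsto(x,y-kx)$: it fixes $(0,7.5)$ and shifts $(10,\tilde y)$ to $(10,\tilde y-10k)$, so choosing $k\in\ZZ$ appropriately places the shifted value in $[-5,5)$. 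If it lands in $(0,5)$, applying the unimodular reflection $(m,n)\mapsto(m,-n)$, dual to $(x,y)\mapsto(x,-y)$, flips its sign while still keeping $(0,7.5)$ in $P$ via central symmetry. Re-deriving points on $y=-10$ and $x+y=10$ for the transformed graph as before, all four hypotheses of Lemma~\ref{lm:polygon} hold.

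We now apply Lemma~\ref{lm:polygon}. In the first case, $P(G)$ contains $(0,4),(4,-4),(4,0),(4,4)$, and Theorem~\ref{thm:Sch} yields four pairwise disjoint cycles in each of $C_{1,0},C_{0,1},C_{1,1},C_{1,-1}$; setting $h_1=C_{1,0}$ and $h_2=C_{0,1}$ gives $h_1+h_2=C_{1,1}$ and $h_1-h_2=C_{1,-1}$. In the second case, $P(G)$ contains $(0,-4),(4,-8),(4,-4),(4,0)$, and Theorem~\ref{thm:Sch} yields four disjoint cycles in each of $C_{1,0},C_{0,1},C_{1,1},C_{2,1}$; setting $h_1=C_{1,1}$ and $h_2=C_{1,0}$ gives $h_1+h_2=C_{2,1}$ and $h_1-h_2=C_{0,1}$. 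Pulling $h_1,h_2$ back through the composed unimodular transformation restores the desired classes for the original graph. The main obstacle is the third step: the interval $[0,5]$ in Lemma~\ref{lm:polygon}'s third hypothesis is genuinely restrictive, and the shear-plus-possible-reflection used to guarantee $\tilde y\in[-5,0]$ must be chosen so as not to invalidate the other hypotheses, which is why the remaining points must be re-derived for the transformed graph rather than pushed forward directly.
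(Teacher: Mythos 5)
Your proposal is correct and follows essentially the same route as the paper: use Theorem~\ref{thm:polar} (and $r(G)\ge 10$) to place $(0,\pm 7.5)$ in $P(G)$ after a unimodular change of coordinates, use \eqref{eq:maxP} together with a shear and possible reflection to place $(10,-y_0)$ with $y_0\in[0,5]$ as well as points on the lines $y=-10$ and $x+y=10$ in $P(G)$, invoke Lemma~\ref{lm:polygon}, and convert the resulting integer points into disjoint cycles via Theorem~\ref{thm:Sch}. Your remark that the extra constraint points should be re-derived for the transformed graph (rather than pushed forward through the shear, which does not preserve those lines) is exactly how the paper proceeds as well, and your identification of the homotopy classes $h_1,h_2$ in the two cases is careful and matches Theorem~\ref{thm:Sch}.
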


\begin{proof}
Fix a graph $G$ as in the statement of the theorem.
By Theorem~\ref{thm:polar},
there exist non-zero $(a,b)\in\ZZ^2$ such that $(7.5a,7.5b)\in P(G)$.
By choosing a suitable parameterization of the torus, we may assume that $a=0$.
It follows that the set $P(G)$ contains the point $(0,7.5)$ (note that if $b\ge 2$,
the point $(0.7.5)$ belongs to $P(G)$ as it is a convex combination combination of the point $(0,0)$ and $(0,7.5b)$) and
so the set $P(G)$ also contains the point $(0,-7.5)$.

We now argue that we may assume that $P(G)$ contains the point $(10,-y_0)$ for some $y_0\in [0,5]$.
Since the representativity of $G$ is at least $10$,
it holds by \eqref{eq:maxP} that $c_G(m,n)\ge 10$ for every non-zero $(m,n)\in\ZZ^2$.
In particular, applying \eqref{eq:maxP} with $(m,n)=(1,0)$ yields that
the set $P(G)$ contains a point $(10,z)$ for some $z\in\RR$.
Since reparameterizing the torus using the mapping $(x,y)\mapsto (x,y+\alpha\cdot x\mod 1)$ for $\alpha\in\ZZ$
transforms the set $P(G)$ by the mapping $(x,y)\mapsto (x,y-\alpha\cdot x)$, we can assume that $z\in [-5,+5]$.
Finally, if $z\ge 0$, we reparameterize the torus using the mapping $(x,y)\mapsto (x,-y)$.
We conclude that
we may assume that the set $P(G)$ contains both points $(0.7.5)$ and $(10,z)$ where $z\in [-5,0]$ and
se we can set $y_0=-z$.

Applying \eqref{eq:maxP} with $(m,n)=(0,-1)$ yields that
$P(G)$ contains a point $(x_0,-10)$ for some $x_0\in\RR$, and
applying \eqref{eq:maxP} with $(m,n)=(1,1)$ yields that
$P(G)$ contains a point $(t_0,10-t_0)$ for some $t_0\in\RR$.
Lemma~\ref{lm:polygon} now implies that the set $P(G)$ contains one of the following quadruples of points:
\begin{itemize}
\item the points $(0,4)$, $(4,-4)$, $(4,0)$ and $(4,4)$, or
\item the points $(0,-4)$, $(4,-8)$, $(4,-4)$ and $(4,0)$.
\end{itemize}
In the former case,
we set $h_1$ to be the free homotopy class of $C_{1,0}$ and
$h_2$ to be the free homotopy class of $C_{0,1}$.
Theorem~\ref{thm:Sch} yields that
the graph $G$ has four disjoint cycles each from $h_1$ as $(4,0)\in P(G)$,
four disjoint cycles each from $h_2$ as $(0,4)\in P(G)$,
four disjoint cycles each from $h_1+h_2$ as $(4,4)\in P(G)$, and
four disjoint cycles each from $h_1-h_2$ as $(4,-4)\in P(G)$.
In the latter case,
we set $h_1$ to be the free homotopy class of $C_{1,-1}$ and
$h_2$ to be the free homotopy class of $C_{0,-1}$.
Again, Theorem~\ref{thm:Sch} yields that
the graph $G$ has four disjoint cycles each from $h_1$ as $(4,-4)\in P(G)$,
four disjoint cycles each from $h_2$ as $(0,-4)\in P(G)$,
four disjoint cycles each from $h_1+h_2$ as $(4,-8)\in P(G)$, and
four disjoint cycles each from $h_1-h_2$ as $(4,0)\in P(G)$.
The proof of the theorem is now completed.
\end{proof}

\section{Main result}
\label{sec:main}

We now review concepts related to angle permutations from~\cite[Section 3]{HutRS02}, however,
we prefer casting them in the language of vertex-coloring;
we include short proofs of key statements for completeness.

\begin{figure}
\begin{center}
\epsfbox{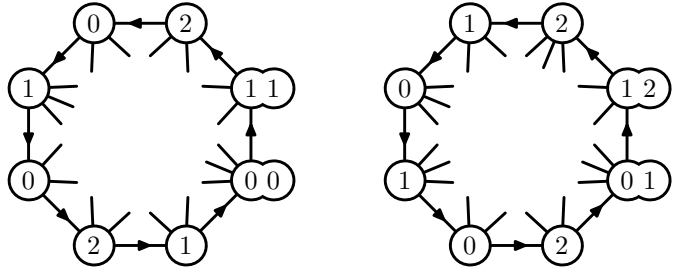}
\end{center}
\caption{An example of the mapping $\varphi_W$ for two closed walks of length eight.
         The vertices $v_0=v_8$ and $v_1=v_9$ are the two right most vertices
	 while the direction of the walk is depicted by arrows.
         For the vertices $v_0=v_8$ and $v_1=v_9$,
	 the left numbers are $\varphi_W(0)$ and $\varphi_W(1)$, and
	 the right numbers are $\varphi_W(8)$ and $\varphi_W(9)$.}
\label{fig:phiW}
\end{figure}

Fix a graph $G$ embedded in an orientable surface,
which will be the plane, the torus or the cylinder in our arguments.
Let $v_1,\ldots,v_{\ell}$ be a closed walk $W$ in $G$.
We say that a vertex $v_i$, $i\in [\ell]$, is \emph{bad}
if $v_i$ is incident with an even number of faces to the left from the walk.
We define $\varphi_W:\{0\}\cup [\ell+1]\to\{0,1,2\}$ as follows:
\[
\varphi_W(k)=\begin{cases}
           k & \mbox{if $k=0$ or $k=1$,} \\
           \varphi_W(k-1) & \mbox{if $k\ge 2$ and $v_k$ is bad, and}\\
           3-\varphi_W(k-1)-\varphi_W(k-2) & \mbox{otherwise,}
	   \end{cases}
\]
where $k\in \{0\}\cup [\ell+1]$ and the indices are modulo $\ell$,
i.e., $v_0=v_{\ell}$ and $v_1=v_{\ell+1}$.
See Figure~\ref{fig:phiW} for an example.
Observe that if $G$ is an Eulerian triangulation of the plane and $v_1,\ldots,v_{\ell}$ a closed walk in $G$,
then the vertex-coloring of $G$ with $0$, $1$ and $2$ such that
the color of the vertex $v_{\ell}=v_0$ is $0$ and the color of $v_1$ is $1$
assigns the color $\varphi_W(i)$ to the vertex $v_i$, $i\in [\ell]$.
The \emph{type} of the closed walk $W$
is the unique permutation $\pi\in S_3$ (we view $S_3$ as permutations on the set $\{0,1,2\}$) such that
$\pi(0)=\varphi_W(\ell)$ and $\pi(1)=\varphi_W(\ell+1)$.
Observe that the type of a closed walk does not depend on the choice of the starting vertex of $W$,
i.e., it depends only on the closed walk itself and its direction.

We next verify that the type of a closed walk in an Eulerian triangulation of the torus depends only on its free homotopy.

\begin{proposition}
\label{prop:type}
Let $G$ be an Eulerian triangulation of the torus.
If two closed walks $W_1$ and $W_2$ are freely homotopic,
then the types of $W_1$ and $W_2$ are the same.
\end{proposition}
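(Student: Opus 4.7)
The plan is to pass to the universal cover of the torus and exploit two facts: (i) the proper $3$-coloring of an Eulerian triangulation of the plane is unique up to a permutation of colors, and (ii) the fundamental group of the torus is abelian, so free homotopy classes of closed walks in $G$ correspond to elements (not merely conjugacy classes) of $\pi_1$.

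Concretely, I would take $\tilde G$ to be the lift of $G$ to the universal cover $\RR^2\to\RR^2/\ZZ^2$ of the torus; since the covering map is a local homeomorphism, $\tilde G$ inherits from $G$ the structure of an Eulerian triangulation of the plane, and hence admits a proper $3$-coloring $c$. For a closed walk $W=v_0 v_1\ldots v_\ell$ in $G$ (so $v_\ell=v_0$ and $v_{\ell+1}=v_1$), I would fix a lift $\tilde v_0$ of $v_0$ and lift $W$ to a walk $\tilde v_0\tilde v_1\ldots\tilde v_{\ell+1}$ in $\tilde G$. Since the notion of a bad vertex is a local combinatorial property of the walk and its embedding, badness is preserved by the lift; applied to the Eulerian planar triangulation $\tilde G$, the observation preceding the definition of type then yields that, after composing $c$ with the unique color permutation sending $c(\tilde v_0)$ to $0$ and $c(\tilde v_1)$ to $1$, the color of $\tilde v_i$ equals $\varphi_W(i)$ for every $i$.

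The endpoint $\tilde v_\ell$ equals $\sigma_W(\tilde v_0)$, where $\sigma_W$ is the deck transformation of $\tilde G\to G$ corresponding to the homotopy class of $W$, and likewise $\tilde v_{\ell+1}=\sigma_W(\tilde v_1)$. Consequently the type of $W$ is exactly the permutation describing how $\sigma_W$ acts on the two colors $(c(\tilde v_0),c(\tilde v_1))$ (under the above normalization), and in particular depends only on $\sigma_W$. Since $\pi_1$ of the torus is $\ZZ^2$ and hence abelian, conjugacy classes in the deck group are singletons and the deck transformation attached to a closed walk depends only on its free homotopy class. Therefore $\sigma_{W_1}=\sigma_{W_2}$ whenever $W_1$ and $W_2$ are freely homotopic, and their types coincide.

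The step I expect to require the most care is the bookkeeping that links $\varphi_W$, which is defined purely combinatorially in $G$, to the $3$-coloring $c$ of $\tilde G$ and to the deck-group action; in particular, verifying that the resulting permutation is independent of the choices of lift and of $c$ (once the first two colors are normalized), and that differing basepoints for $W_1$ and $W_2$ contribute only conjugation, which is trivial in $\ZZ^2$. An alternative, more hands-on route would be to show invariance of the type under the three elementary moves generating free homotopy of closed walks in an embedded graph (cyclic rotation, insertion or deletion of a backtrack $u v u$, and replacement of a subwalk $u v$ along a face boundary by the complementary path $u w v$), but the case analysis tracking badness through these moves is more tedious than the universal-cover argument above.
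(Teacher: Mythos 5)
Your approach is genuinely different from the paper's: you pass to the universal cover and try to read the type off the deck-group action on the $3$-coloring of the lifted plane triangulation, whereas the paper verifies invariance directly under the two elementary moves ($a\mapsto aba$ and $ab\mapsto acb$) that generate free homotopy of closed walks, invoking the earlier ``observe'' that rotating the base vertex does not change the type. You even name that elementary-moves route as the ``more tedious'' alternative at the end, and that is precisely the route the paper takes.

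However, there is a real gap in the step ``and in particular depends only on $\sigma_W$.'' Your own bookkeeping yields the following: fixing a proper $3$-coloring $c$ of $\tilde G$, and letting $\nu\in S_3$ be the unique permutation with $c\circ\sigma_W=\nu\circ c$ (which indeed depends only on $\sigma_W$), and letting $\rho_W\in S_3$ be the permutation normalizing the colors of the first two lifted vertices (so $\rho_W(c(\tilde v_0))=0$, $\rho_W(c(\tilde v_1))=1$), the type comes out to $\pi_W=\rho_W\,\nu\,\rho_W^{-1}$. This conjugate is taken in $S_3$, which is \emph{not} abelian; the ``conjugation is trivial in $\ZZ^2$'' remark only establishes that $\sigma_{W_1}=\sigma_{W_2}$, hence that $\nu$ is common to both walks, but it says nothing about the conjugating element $\rho_W$. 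Two freely homotopic walks can start at vertices (and next-vertices) whose lifted color pairs differ, so $\rho_{W_1}\ne\rho_{W_2}$ is entirely possible, and then $\rho_{W_1}\nu\rho_{W_1}^{-1}$ and $\rho_{W_2}\nu\rho_{W_2}^{-1}$ need not be the same permutation (they are only guaranteed to have the same cycle type). Equality of the two types is therefore not a formal consequence of $\sigma_{W_1}=\sigma_{W_2}$; it needs an additional argument constraining which pairs $(\rho_W,\nu)$ can arise. In effect this constraint is exactly what the paper's case analysis of the two elementary moves (together with the base-point-rotation observation) establishes, so the gap is not a minor bookkeeping point but the actual content of the proposition. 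To repair the universal-cover argument you would need to prove, for instance, that $\rho_W$ always commutes with $\nu$ for walks realizing $\sigma_W$, or otherwise show directly that $\rho_W\nu\rho_W^{-1}$ is independent of the admissible $\rho_W$; as written this is asserted rather than proved.
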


\begin{figure}
\begin{center}
\epsfbox{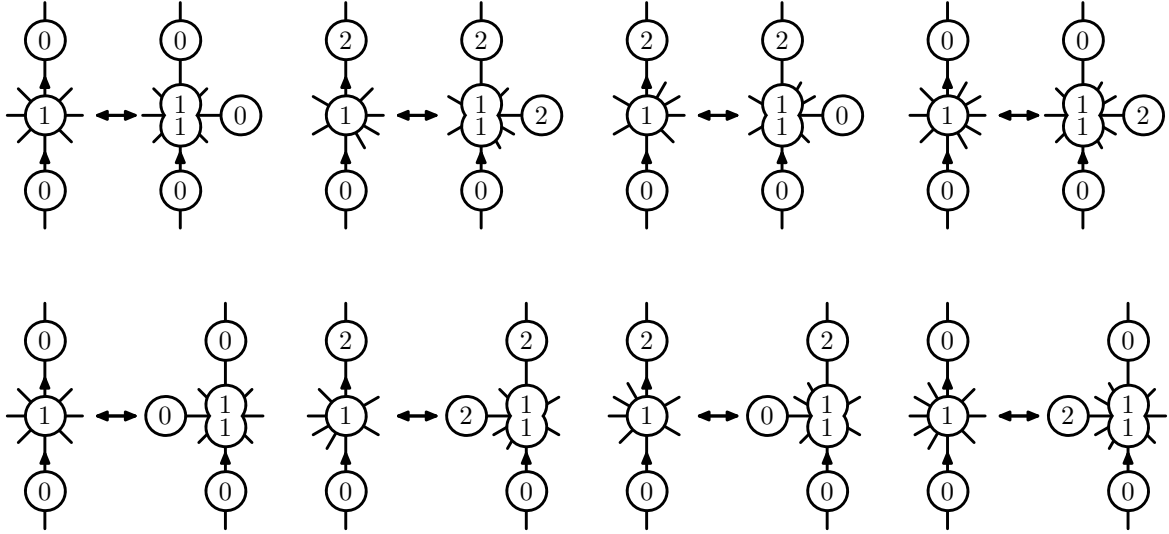}
\end{center}
\caption{Changes of the function $\varphi_W$ for a walk $W$ in an Eulerian triangulation when $a$ is replaced with $aba$.
         The parity of degrees of vertices on a walk $W$ is indicated by the number of half-edges.}
\label{fig:walk1}
\end{figure}

\begin{figure}
\begin{center}
\epsfbox{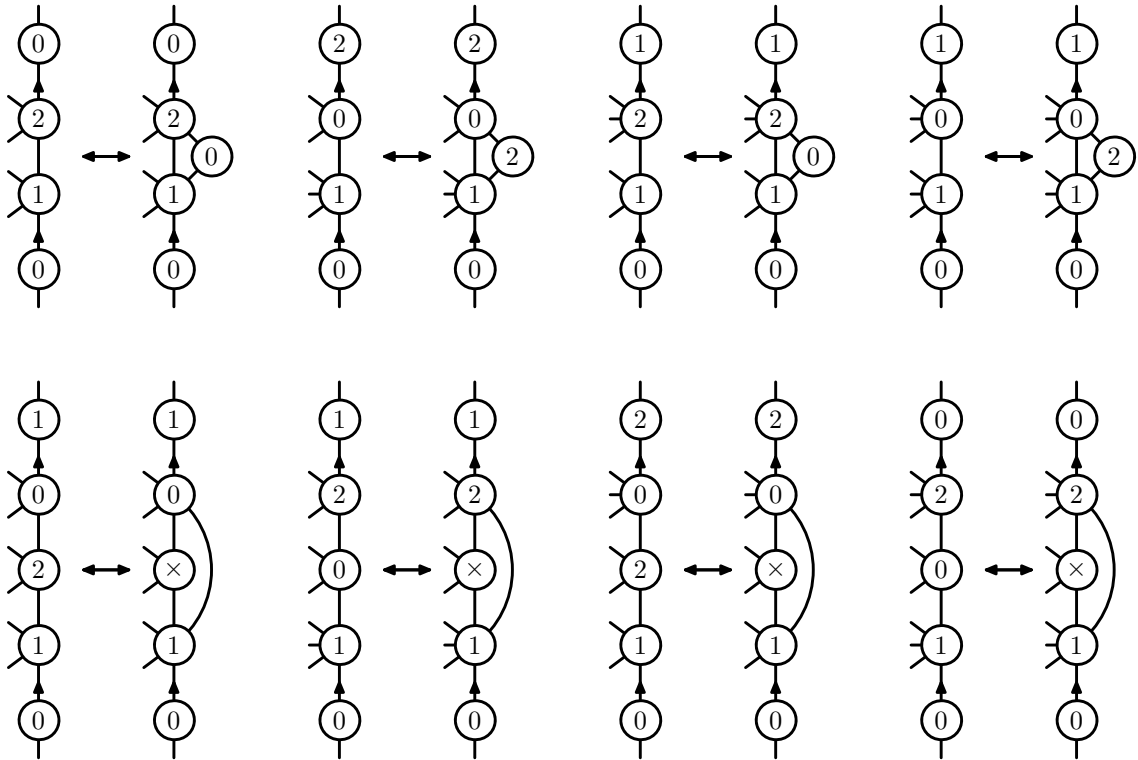}
\end{center}
\caption{Changes of the function $\varphi_W$ for a walk $W$ in an Eulerian triangulation
         when $ab$ is replaced with $acb$ for a triangular face $abc$.
         The parity of degrees of vertices on a walk $W$ is indicated by the number of half-edges;
	 the middle vertex in the bottom four cases is never bad as the triangulation is Eulerian.}
\label{fig:walk2}
\end{figure}

\begin{proof}
Recall that any two freely homotopic closed walks can be transformed to each other
by a sequence of the following two types of operations on subwalks or the reverses of these two operations:
replacing $a$ with $aba$ where $b$ is a neighbor of $a$, and
replacing $ab$ with $acb$ where $abc$ is a triangular face.
Since the former operation preserves the value assigned to the vertex preceding $a$, the vertex $a$ itself, and
the vertex following $a$ in the walk (see Figure~\ref{fig:walk1} for illustration of possible cases) and
the latter operation preserves the value assigned to the vertex preceding $a$, the vertices $a$ and $b$, and
the vertex following $b$ (see Figure~\ref{fig:walk2} for illustration of possible cases),
the type of a closed walk is not altered by any of these two operations.
It follows that the types of any two freely homotopic closed walks are the same.
\end{proof}

Proposition~\ref{prop:type} yields that
we can define a function $\tau$ from free homotopy classes of closed curves to $S_3$
by setting $\tau(h)$ to be the common type of all closed walks of $G$ that belong to $h$.
Consider two closed walks $W_1=v_1,\ldots,v_k$ and $W_2=w_1,\ldots,w_{\ell}$ such that $v_1=w_1$ and $v_k=v_{\ell}$.
Since the type of the closed $v_1,\ldots,v_kw_1,\ldots,w_{\ell}$ is the composition of the types of $W_1$ and $W_2$,
we obtain that $\tau$ is a homomorphism from the group of free homotopy classes to $S_3$.

We next focus on completing a graph obtained from an Eulerian triangulation on the torus to an Eulerian triangulation of the plane.
Recall that a graph obtained by cutting along a non-contractible cycle
is the graph embedded in the surface obtained by cutting the surface along the curve corresponding to the cycle
while duplicating all vertices and edges of the cycle and
embedding the two copies of the cycle along the boundaries created by the cutting.
We remark that the implication stated in Proposition~\ref{prop:cylinder} is actually an equivalence
but we only present the argument for the implication, which we need in the proof of Theorem~\ref{thm:main}.

\begin{proposition}
\label{prop:cylinder}
Let $G$ be an Eulerian triangulation of the torus and
let $C$ be a non-contractible cycle of $G$.
If the type of the cycle $C$ is the identity,
the graph obtained from the Eulerian triangulation $G$ by cutting along $C$,
which is embedded in the cylinder,
can be completed to an Eulerian triangulation of the plane.
\end{proposition}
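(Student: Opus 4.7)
The plan is to exploit Heawood's theorem (a planar triangulation admits a proper $3$-coloring if and only if it is Eulerian) and thereby reduce the problem to constructing a $3$-colored completion. First I would verify that the cut graph $G'$ in the cylinder admits a proper $3$-coloring, then extend this coloring to triangulations of the two bounded disks, and finally invoke Heawood's theorem to conclude that the completion is Eulerian.

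For the $3$-coloring of $G'$, let $C'$ denote one of its boundary cycles. Since cutting along $C$ preserves the local structure (the cyclic order of edges and the incident faces on one specific side of $C$) at each vertex, the type of the walk $C'$ in $G'$ equals $\tau(C) = \mathrm{id}$ in $G$. The fundamental group of the cylinder is infinite cyclic and generated by the class of $C'$, so every closed walk in $G'$ is freely homotopic to a combination of powers of $C'$ and contractible loops. Since $\tau$ is a homomorphism and contractible walks always have type identity, every closed walk in $G'$ has type identity. A standard color-propagation argument, iteratively assigning colors so that every triangular face becomes rainbow, is therefore consistent around every closed walk and yields a proper $3$-coloring of $G'$.

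For the extension to the bounded disks, I would prove the following classical lemma by induction on the cycle length $\ell$: any proper $3$-coloring of an $\ell$-cycle extends to a proper $3$-coloring of some triangulation of the bounded disk. If $\ell = 3$, the cycle is itself a rainbow triangle. If $\ell \geq 4$ and three colors appear on the cycle, then the coloring is not $2$-periodic, so some triple of consecutive vertices carries three distinct colors, and adding a diagonal between the outermost two cuts off a rainbow triangle and reduces to a shorter cycle. If $\ell \geq 4$ and only two colors appear, then $\ell$ is even and a single interior vertex colored with the third color, joined to every boundary vertex, produces $\ell$ rainbow triangles. Applying the lemma to both boundary cycles of $G'$ gives a planar triangulation equipped with a proper $3$-coloring, which is Eulerian by Heawood's theorem. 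The main obstacle is the first step: cleanly translating the type-identity hypothesis on $C$ into $3$-colorability of the whole cylinder graph $G'$. This relies on the fact that $\pi_1$ of the cylinder is generated by the class of $C'$, so that the single constraint $\tau(C') = \mathrm{id}$ rules out monodromy of the color-propagation along every closed walk.
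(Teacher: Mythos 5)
Your proof is correct, and it takes a genuinely different (though closely related) route from the paper's. The paper never $3$-colors the cut graph $G'$: instead it works entirely with degree parities, filling each disk face by a direct combinatorial construction (add a single apex if $\varphi_{C_1}$ uses only two colors; otherwise find three consecutive vertices with three distinct $\varphi$-values, add the chord, and iterate) and verifying at each step that the parities of boundary degrees stay correct, so that the resulting plane triangulation is Eulerian by inspection. You instead first establish that $G'$ itself is $3$-colorable via a monodromy argument on the cylinder (the type of the core cycle is the identity, and $\pi_1$ of the cylinder is generated by it, so color propagation is globally consistent), then extend the boundary colorings into the two disks by the classical ``cut off a rainbow triangle / add an apex'' lemma, and finally invoke the converse direction of Heawood's theorem to deduce that the completed plane triangulation is Eulerian. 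The two disk-filling procedures are the same combinatorial construction seen through different lenses ($\varphi$-values versus colors), so what your approach buys is conceptual clarity and a clean appeal to a classical theorem, at the cost of needing an extra non-trivial step: you must prove the cylinder analogue of Proposition~\ref{prop:type}, show that contractible closed walks have identity type, and make the propagation argument precise; the paper sidesteps all of this by reasoning about parities directly and never claiming anything about the $3$-chromatic structure of $G'$. Both routes are sound, and yours would need the monodromy step fleshed out to reach the paper's level of rigor, but there is no gap in the overall plan.
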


\begin{proof}
Fix an Eulerian triangulation $G$ of the torus and a non-contractible cycle $C$ of $G$ as in the statement, and
consider the graph obtained from $G$ by cutting along $C$.
Paste two copies of a disk to obtain a plane graph,
let $C_1$ and $C'_1$ be the two copies of the cycle $C$, and
let $f_1$ and $f'_1$ be the two faces bounded by $C_1$ and $C'_1$, respectively.
Observe that all vertices except those on $C_1$ and $C'_1$ have even degrees.

We now show that it is possible to add new vertices and edges inside the face $f_1$ in a way that
all vertices inside the face $f_1$ and all vertices on $C_1$ have even degrees and
all newly created faces inside $f_1$ are triangles.
If the length of $C_1$ is three and its type is the identity,
then none of the three vertices of $C_1$ is bad,
i.e., their degrees are even and no vertices and edges need to be added.
Consider now the mapping $\varphi_{C_1}$ from the definition of a type of a closed walk;
since the type of $C_1$ is identity, the mapping is the same (up to permutation of colors) regardless the choice of a starting vertex.
If the mapping $\varphi_{C_1}$ uses only two colors,
each vertex of $C_1$ is bad, i.e., its degree is odd, and
the length of $C_1$ is even as its type is the identity.
In this case, we add a single new vertex $w$ and join it to each vertex of $C_1$.

\begin{figure}
\begin{center}
\epsfbox{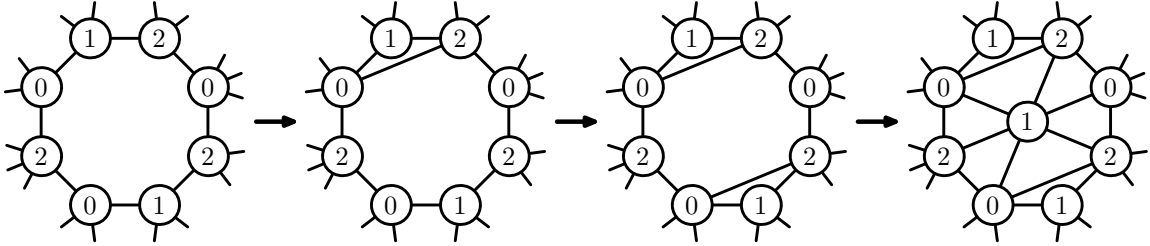}
\end{center}
\caption{An example of the iterative process used in the proof of Proposition~\ref{prop:cylinder}
         to complete the graph inside the face $f_1$.}
\label{fig:cylinder}
\end{figure}

It remains to analyze the case when the length of $C_1$ is at least four and
the mapping $\varphi_{C_1}$ uses three colors.
The latter implies that there exist three vertices $v$, $v'$ and $v''$ consecutive on the cycle $C_1$ such that
the colors $\varphi_{C_1}(v)$, $\varphi_{C_1}(v')$ and $\varphi_{C_1}(v'')$ are distinct;
in particular, the vertex $\varphi_{C_1}(v')$ is not bad and so its degree is even.
We now add the edge $vv''$ and let $C_2$ be the cycle $C_1$ with $vv'v''$ replaced with $vv''$.
Note that $v$ is bad in $C_2$ if and only if $v$ is not bad in $C_1$, and
$v'$ is bad in $C_2$ if and only if $v'$ is not bad in $C_1$.
It follows that $\varphi_{C_2}$ is the restriction of $\varphi_{C_1}$ to the vertices of $C_2$.
Note that the length of $C_2$ is shorter than the length of $C_1$.
We refer to Figure~\ref{fig:cylinder} for an example of the presented procedure.
We now proceed with $C_2$ in the same way as with $C_1$ and
we either obtain a graph embedded in $f_1$ with all degrees even and all faces triangles, or
a shorter cycle $C_3$.
We then proceed with the cycle $C_3$, etc.
Since the length of the cycle shortens at each step,
we eventually obtain a graph embedded within the face $f_1$ such that
all vertices inside $f_1$ and on the cycle $C_1$ have even degrees and
all faces inside $f_1$ are triangles.

A symmetric argument as that used for the face $f_1$ yields that
it is also possible to add new vertices and edges inside the face $f'_1$ in a way that
all vertices inside $f'_1$ and on $C'_1$ have even degrees and all newly created faces are triangles.
It follows that the graph obtained from $G$ by cutting along the cycle $C$
can be completed to an Eulerian triangulation of the plane.
\end{proof}

We are now ready to prove the main theorem of this paper.

\begin{proof}[Proof of Theorem~\ref{thm:main}]
Fix an Eulerian triangulation $G$ of the torus that has representativity at least ten.
We show that there exists a free homotopy class $h_0$ such that
$G$ contains four disjoint cycles each from $h_0$ and $\tau(h_0)$ is the identity.
Apply Theorem~\ref{thm:polygon} to $G$, and
let $h_1$ and $h_2$ be the two free homotopy classes from the statement of the theorem.
If $\tau(h_1)$ or $\tau(h_2)$ is the identity,
then we have established the existence of the sought free homotopy class $h_0$.
Since the group of free homotopy classes is abelian and $\tau$ is its homomorphism to $S_3$,
the subgroup of $S_3$ that is the image of $\tau$ has either two or three elements.
In the former case, $\tau(h_1+h_2)$ is the identity and so we can set $h_0=h_1+h_2$.
In the latter case, $\tau(h_1-h_2)$, $\tau(h_1)$ and $\tau(h_1+h_2)$ are different elements of this $3$-element subgroup and
so one of them is the identity, which yields the choice of $h_0$.

\begin{table}
\begin{center}
\begin{tabular}{|c|cccc|cccc|cccc|}
\hline
$\pi$ & \multicolumn{4}{c|}{$0\mapsto 0$, $1\mapsto 1$, $2\mapsto 2$}
      & \multicolumn{4}{c|}{$0\mapsto 1$, $1\mapsto 0$, $2\mapsto 2$}
      & \multicolumn{4}{c|}{$0\mapsto 1$, $1\mapsto 2$, $2\mapsto 0$} \\
\hline
$k$ & $f_1(k)$ & $f_2(k)$ & $f_3(k)$ & $f_4(k)$ & $f_1(k)$ & $f_2(k)$ & $f_3(k)$ & $f_4(k)$ & $f_1(k)$ & $f_2(k)$ & $f_3(k)$ & $f_4(k)$ \\
\hline
$0$ & $0$ & $0$ & $0$ & $0$  &  $0$ & $1$ & $1$ & $1$  &  $3$ & $3$ & $3$ & $1$ \\
$1$ & $1$ & $1$ & $1$ & $1$  &  $3$ & $3$ & $0$ & $0$  &  $1$ & $1$ & $2$ & $2$ \\
$2$ & $2$ & $2$ & $2$ & $2$  &  $2$ & $2$ & $2$ & $2$  &  $2$ & $0$ & $0$ & $0$ \\
\hline
\end{tabular}
\end{center}
\caption{The choice of functions $f_1,\ldots,f_4$ in the proof of Theorem~\ref{thm:main}.}
\label{tab:fpi}
\end{table}

Fix four vertex-disjoint cycles $C_1$, $C_2$, $C_3$ and $C_4$ of $G$ that belong to the free homotopy class $h_0$.
Let $G'$ be the graph obtained from $G$ by cutting along the cycle $C_1$;
note that the graph $G'$ is embedded in the cylinder.
Let $C'_1$ and $C''_1$ be the two copies of $C_1$ on the boundary of the cylinder.
By Proposition~\ref{prop:cylinder},
the graph $G'$ can be completed to an Eulerian triangulation of the plane and so $G'$ is $3$-colorable.
Fix a vertex-coloring $c$ of $G'$ using the colors $0$, $1$ and $2$.
Observe that the coloring $c$ restricted to $C'_1$ or to $C''_1$
is the function $\varphi_{C_1}$ with the colors $0$, $1$ and $2$ permuted,
however, possibly by a different permutation for each of $C'_1$ or to $C''_1$
Let $\pi\in S_3$ be such that $\pi(c(v''))=c(v')$ for every pair $v'$ and $v''$ of corresponding vertices in $C'_1$ and $C''_1$.
By symmetry,
we may assume that the permutation $\pi$
is either identity, or $\pi(0)=1$, $\pi(1)=0$, $\pi(2)=2$, or $\pi(0)=1$, $\pi(1)=2$, $\pi(2)=0$.
For each of the three cases, define functions $f_1,f_2,f_3,f_4:\{0,1,2\}\to\{0,1,2,3\}$ as given in Table~\ref{tab:fpi}.
For completeness, we also define the function $f_0:\{0,1,2\}\to\{0,1,2,3\}$ as $f_0(i)=i$ for $i\in\{0,1,2\}$.

Let $V_0$ be the set of the vertices of the cycle $C'_1$,
$V_1$ the set of all vertices of $G'$ between the cycles $C'_1$ (excluding this cycle) and $C_2$ (including this cycle),
$V_2$ the set of all vertices of $G'$ between the cycles $C_2$ (excluding this cycle) and $C_3$ (including this cycle),
$V_3$ the set of all vertices of $G'$ between the cycles $C_3$ (excluding this cycle) and $C_4$ (including this cycle), and
$V_4$ the set of all vertices of $G'$ between the cycles $C_4$ (excluding this cycle) and $C''_1$ (including this cycle).
We now recolor each vertex $v$ of every set $V_i$, $i\in [4]$, with the color $f_i(c(v))$.
Observe that since the vertices of $V_0$ keep their original colors,
each vertex $v$ of $V_0$ is colored with $f_0(c(v))$.
We next argue that no two adjacent vertices have the same color.
Consider two adjacent vertices $v$ and $w$ and
consider $i\in [4]$ such that both $v$ and $w$ belong to $V_{i-1}\cup V_i$.
Since the sets $\{f_{i-1}(k),f_i(k)\}$ and $\{f_{i-1}(\ell),f_i(\ell)\}$ are disjoint for any distinct $k,\ell\in\{0,1,2\}$,
the vertices $v$ and $w$ have different colors.
Since any two vertices $v'$ and $v''$ of $C'_1$ and $C''_1$ corresponding to the same vertex of $C_1$
have the same color after recoloring (note that $\pi$ and $f_4$ coincide),
we can glue the cycles $C'_1$ and $C''_1$ to obtain back the graph $G$ and
so we get a proper vertex-coloring of $G$ with four colors.
\end{proof}

\section{Lower bound}
\label{sec:lower}

In this section, we show that the graph $\Gamma(\ZZ_{37},\{1,10,11\})$
is a non-$4$-colorable Eulerian triangulation of the torus with representativity seven.
As mentioned in Section~\ref{sec:intro},
the chromatic number of the graph $\Gamma(\ZZ_{37},\{1,10,11\})$ was determined in~\cite{YehZ03},
however, we prefer including a short proof for completeness.
In addition, we believe that the proof also highlights particular symmetries that the graph $\Gamma(\ZZ_{37},\{1,10,11\})$ possesses.

We formulate and prove the bounds on the representativity and the chromatic number of the graph $\Gamma(\ZZ_{37},\{1,10,11\})$
as propositions.

\begin{proposition}
\label{prop:repr}
The representativity of the unique embedding of the graph $\Gamma(\ZZ_{37},\{1,10,11\})$ in the torus is seven.
\end{proposition}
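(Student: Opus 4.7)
The plan is to prove the two bounds $r(G) \le 7$ and $r(G) \ge 7$ separately, both by working in the universal cover of the torus. I would identify each vertex $v \in \ZZ_{37}$ with the integer points $(i, j) \in \ZZ^2$ satisfying $i + 10j \equiv v \pmod{37}$, so that the torus is $\RR^2/L$ for the deck lattice $L = \{(a, b) \in \ZZ^2 : a + 10b \equiv 0 \pmod{37}\}$; the triangular faces lift to the $A$-triangles $\{(i, j), (i, j+1), (i+1, j+1)\}$ and the $B$-triangles $\{(i, j), (i+1, j), (i+1, j+1)\}$, which alternate around every vertex.

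For the upper bound $r(G) \le 7$, I would use the walk $0, 1, 2, 3, 4, 15, 26$ of length seven in $G$ (four $+1$ edges followed by three $+11$ edges): its lift from the origin ends at $(7, 3) \in L \setminus \{(0,0)\}$, so it is a non-contractible cycle, and a closed curve following it, passing through its seven vertices and otherwise running through face interiors, meets $G$ in exactly seven points.

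For the lower bound I would fix an arbitrary non-contractible simple closed curve $\gamma$ with $N$ intersection points with $G$ and analyse its lift as a length-$N$ walk in the \emph{face graph}, whose vertices are the triangular faces and whose steps are either a transversal edge-crossing to the face sharing that edge or a transversal vertex-passage to any of the other faces meeting that vertex. A short enumeration shows that the admissible step displacements (in the basis $e_1 = (1, 0)$, $e_2 = (0, 1)$) are $\{0, \pm e_1, \pm e_2, \pm(e_1 + e_2), -e_1 + e_2\}$ from an $A$-face and $\{0, \pm e_1, \pm e_2, \pm(e_1 + e_2), e_1 - e_2\}$ from a $B$-face, so $-e_1 + e_2$ is forbidden from $B$ and $e_1 - e_2$ from $A$. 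The walk ends at a face of the same type as its start, translated by some $(p, q) \in L \setminus \{(0,0)\}$. The only such vectors with $\max(|p|, |q|) \le 7$ are $(\pm 7, \pm 3)$, $(\pm 4, \pm 7)$ and $(\pm 3, \mp 4)$; for the first two families $N \ge 7$ is immediate, since each step changes each coordinate by at most one.

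The main obstacle is the remaining pair $(\pm 3, \mp 4)$, where the trivial bound only yields $N \ge 4$, so a sharper argument is needed. For $(p, q) = (-3, 4)$ I would project step displacements onto the direction $(-3, 4)$ via the functional $\phi(x, y) = -3x + 4y$; then $\phi(p, q) = 25$, while the largest projection value $\phi = 7$ is attained only at $-e_1 + e_2$, which is available only on $A$-to-$B$ transitions; $B$-to-$A$ transitions attain at most $\phi = 1$, and $A$-to-$A$ or $B$-to-$B$ steps attain at most $\phi = 4$. Because the walk has matching start and end types, the numbers of $A$-to-$B$ and $B$-to-$A$ steps coincide, so the total projection over the $N$ steps is at most $4N$; hence $4N \ge 25$ forces $N \ge 7$. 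A mirror argument exchanging the roles of $A$ and $B$ handles $(3, -4)$, and combining the two bounds yields $r(G) = 7$.
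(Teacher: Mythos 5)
Your proof is correct, but it takes a genuinely different route for the lower bound. The paper invokes the standard fact that in a triangulation the representativity equals the edge-width (length of the shortest non-contractible cycle); this reduces the lower bound to showing that every multiset of steps in $\{\pm 1,\pm 10,\pm 11\}$ summing to $0\bmod 37$ but lifting to a non-zero vector in $\ZZ^2$ has size at least seven, which the paper does by a direct case analysis of which step types can co-occur in a minimal multiset. You instead bound the number of intersections of a non-contractible curve with $G$ directly, by lifting the curve to a walk in the face graph of the universal cover, classifying the admissible per-step displacements according to the face type ($A$ or $B$), and then applying a projection functional $\phi(x,y)=-3x+4y$ together with the observation that the number of $A$-to-$B$ steps equals the number of $B$-to-$A$ steps (since start and end faces have the same type). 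In effect, you re-derive the triangulation representativity/edge-width equality in this particular instance rather than citing it, at the cost of a more delicate combinatorial analysis of vertex-passages versus edge-crossings. Your enumeration of step displacements, the list of short lattice vectors $(\pm 7,\pm 3)$, $(\pm 4,\pm 7)$, $(\pm 3,\mp 4)$ in $L=\{(a,b):a+10b\equiv 0\pmod{37}\}$, and the bound $\sum\phi\le 7a+1b+4c=4N$ (using $a=b$) all check out, and the upper-bound construction $0,1,2,3,4,15,26$ is the same cycle the paper uses. The only small expositional caveat is that the displacement vector of a face-to-face step needs to be fixed consistently (you use the parameterization of $A$- and $B$-faces by their bottom-left vertex, which is fine), and that one should note the curve can be taken in general position so that every intersection with $G$ is a transversal edge-crossing or a vertex-passage; with that spelled out, the argument is complete.
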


\begin{proof}
Since the embedding of the graph $\Gamma(\ZZ_{37},\{1,10,11\})$ in the torus is a triangulation,
the representativity is equal to the length of the shortest non-contractible cycle.
Let $v_1,\ldots,v_{\ell}$ be any cycle of $\Gamma(\ZZ_{37},\{1,10,11\})$ and
let $a_i$, $i\in [\ell]$, be one of the six integers $\pm 1$, $\pm 10$ and $\pm 11$ such that
$a_i=v_{i+1}-v_i\mod 37$.
Since $v_1,\ldots,v_{\ell}$ is a cycle in the graph $\Gamma(\ZZ_{37},\{1,10,11\})$,
it holds that
\begin{equation}
a_1+\cdots+a_{\ell}=0\mod 37.
\label{eq:czero}
\end{equation}
Consider a function $h:\{\pm 1,\pm 10,\pm 11\}\to\ZZ^2$ defined as
\begin{align*}
h(+1) & =(+1,0) &
h(+10) & =(0,+1) &
h(+11) & =(+1,+1) \\
h(-1) & =(-1,0) &
h(-10) & =(0,-1) &
h(-11) & =(-1,-1)
\end{align*}
Observe that 
the cycle $v_1,\ldots,v_{\ell}$ is non-contractible if and only if
\begin{equation}
h(a_1)+\cdots+h(a_{\ell})\not=(0,0).
\label{eq:ncontr}
\end{equation}
In particular, the multiset $\{a_1,\ldots,a_{\ell}\}$ determines
whether the cycle $v_1,\ldots,v_{\ell}$ is contractible,
i.e., the order of the elements $a_1,\ldots,a_{\ell}$ is irrelevant.

We will show that any multiset $A$ containing integers $\pm 1$, $\pm 10$ and $\pm 11$ that
satisfies \eqref{eq:czero} and \eqref{eq:ncontr} has at least seven elements (counting multiplicities).
This would imply that the length of the shortest non-contractible cycle of $\Gamma(\ZZ_{37},\{1,10,11\})$ is at least seven and
so the representativity of $\Gamma(\ZZ_{37},\{1,10,11\})$ is at least seven.
Since the cycle formed by the vertices $0$, $1$, $2$, $3$, $4$, $15$ and $26$ is non-contractible,
the representativity of the triangulation $\Gamma(\ZZ_{37},\{1,10,11\})$ is at most seven, and
so establishing the above claim on multisets formed by the integers $\pm 1$, $\pm 10$ and $\pm 11$
would complete the proof of the proposition.

\begin{figure}
\begin{center}
\epsfbox{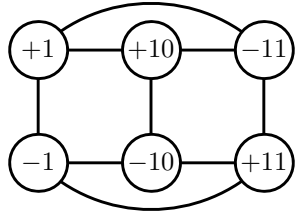}
\end{center}
\caption{The depicted graph has a vertex for each integer $\pm 1$, $\pm 10$ and $\pm 11$, and
two integers are joined by an edge if they cannot be together in a multiset $A$ of minimum size that
satisfies \eqref{eq:czero} and \eqref{eq:ncontr} as argued in the proof of Proposition~\ref{prop:repr}.
In particular, the elements of $A$ must form an independent set in this graph.}
\label{fig:repr-analysis}
\end{figure}

Fix a multiset $A$ formed by the integers $\pm 1$, $\pm 10$ and $\pm 11$ that
satisfies \eqref{eq:czero} and \eqref{eq:ncontr} and that
has the least number of elements (counting multiplicities).
Observe that the multiset $A$ does not contain both $x$ and $-x$ for any $x\in\{1,10,11\}$;
otherwise, a pair of $x$ and $-x$ could be removed from $A$,
which would make the multiset $A$ smaller and preserve \eqref{eq:czero} and \eqref{eq:ncontr}.
In addition,
observe that the multiset $A$ does not contain both $+1$ and $+10$ as
a pair of $+1$ and $+10$ can be replaced with $+11$
making the multiset $A$ smaller while preserving \eqref{eq:czero} and \eqref{eq:ncontr}.
Likewise, the multiset $A$ does not contain both $-1$ and $-10$,
it does not contain both $+1$ and $-11$,
it does not contain both $+10$ and $-11$,
it does not contain both $-1$ and $+11$, and
it does not contain both $-10$ and $+11$.
Hence, we can conclude (see Figure~\ref{fig:repr-analysis}) that
one of the following six cases applies:
the multiset $A$ contains
\begin{itemize}
\item the elements $+1$ and $-10$ only,
\item the elements $+1$ and $+11$ only,
\item the elements $+10$ and $+11$ only,
\item the elements $-1$ and $+10$ only,
\item the elements $-1$ and $-11$ only, and
\item the elements $-10$ and $-11$ only.
\end{itemize}
Since the last three cases are symmetric to the first three cases,
we analyze the first three cases only.

Let $s$ be the sum of the elements of $A$.
Note that $s$ is divisible by $37$ by \eqref{eq:czero}.
In the first case, 
the multiset $A$ contains the elements $+1$ and $-10$ only.
If $s\ge 0$, then $A$ contains the element $+1$ at least ten times, and
if $s\le -74$, then $A$ contains the element $-10$ at least $|s|/10\ge 7$ times.
Finally,
if $s=-37$, then the multiset $A$ contains the element $-10$ at least four times and
the element $+1$ at at least three times,
i.e. $A$ has at least seven elements (which is tight as $+1+1+1-10-10-10-10=-37$).

In the second and third cases, which we analyze jointly,
the multiset $A$ contains the elements $+1$, $+10$ and $+11$ only and so $s>0$.
If $s\ge 74$, then $A$ contains at least $s/11$ elements and
so its size is at least seven (this is tight as $10+10+10+11+11+11+11=74$).
If $s=37$, then $A$ contains at least seven elements equal to $+1$ or $+11$ and
so the size of $A$ is at least seven (this is again tight as $1+1+1+1+11+11+11=37$).
The proof of the proposition is now completed.
\end{proof}

\begin{proposition}
\label{prop:color}
The graph $\Gamma(\ZZ_{37},\{1,10,11\})$ is not $4$-colorable.
\end{proposition}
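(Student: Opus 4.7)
The plan is to show that $\alpha(\Gamma(\ZZ_{37},\{1,10,11\})) \le 9$; since $\lceil 37/4\rceil = 10$, pigeonhole then forces every $4$-coloring to fail, equivalently $\chi \ge 5$.

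The key structural observation is that the connection set $D=\{1,10,11,26,27,36\}$ coincides with the multiplicative subgroup $H=\langle 11\rangle$ of $(\ZZ_{37})^*$ of order six. Multiplication by any $h\in H$ is therefore a graph automorphism fixing $0$, and the six cosets $H, 2H, 3H, 5H, 6H, 9H$ partition $\ZZ_{37}\setminus\{0\}$ into the $H$-orbits. A direct calculation determines both the coset-internal structure---$H$ induces a $6$-cycle, $3H$ two disjoint triangles, $5H$ a perfect matching of three edges, and $2H, 6H, 9H$ are each independent---and the inter-coset structure: $3H$ has no edges to $9H$; $2H$ is matched to each of $H, 3H, 5H, 6H$ and joined to $9H$ by a $2$-regular bipartite graph; and $5H\cup 6H$ decomposes as the disjoint union of three copies of $K_4-e$, where each missing edge lies inside $6H$. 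In particular, for each $w\in 9H$ the unique neighbour $w_5\in 5H$ and unique neighbour $w_6\in 6H$ of $w$ are themselves adjacent.

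Suppose for contradiction that some independent set $S$ has $|S|=10$. Using vertex-transitivity I translate so that $0\in S$; then $S\cap H=\emptyset$, and the remaining nine elements of $S$ lie in $2H\cup 3H\cup 5H\cup 6H\cup 9H$. Writing $a_c=|S\cap c|$, the internal structure gives $a_{3H}\le 2$ and $a_{5H}\le 3$. I then case-analyze by the value of $a_{2H}\in\{0,\dots,6\}$. When $a_{2H}$ is large (say $\ge 3$), the matchings from $2H$ to $3H, 5H, 6H$ and the $2$-regular bipartite graph $2H\to 9H$ forbid so many vertices in the remaining cosets that the residual candidate graph admits no independent set of the required size $9-a_{2H}$. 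When $a_{2H}$ is small (at most $2$), one would need $a_{9H}$ close to $6$ to accumulate nine vertices, but the adjacencies $w_5\sim w_6$ combined with the $K_4-e$ structure of $5H\cup 6H$ sharply cap $a_{5H}+a_{6H}$ once $a_{9H}$ is specified, and combined with $a_{3H}\le 2$ the total again falls short of $9$. In every case $|S|\le 9$, a contradiction.

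Because the $H$-action on each coset is transitive, up to automorphism the set $S\cap 2H$ is determined by one of only a few orbits of subsets of $2H$ (for instance, there are four orbits of $3$-subsets); similarly for the other cosets. The case analysis thus reduces to a modest number of sub-cases. The main obstacle is therefore the bookkeeping: in each sub-case one must identify the residual candidate graph after forbidding the vertices adjacent to those already placed in $S$, and verify by inspection that this residual graph contains no independent set large enough to complete $S$ to size $10$. Once this is done, $\alpha\le 9$ follows and hence the graph is not $4$-colorable.
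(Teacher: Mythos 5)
The algebraic setup in your proposal is correct and is a genuinely different point of view from the paper's. Indeed $\langle 11\rangle=\{1,11,10,36,26,27\}$ in $(\ZZ_{37})^*$, which coincides with the connection set $\{\pm1,\pm10,\pm11\}$, so multiplication by any $h\in H$ is an automorphism fixing $0$. I checked your structural claims: $H$ induces the $6$-cycle $1\text{--}11\text{--}10\text{--}36\text{--}26\text{--}27\text{--}1$; $3H=\{3,4,7,30,33,34\}$ induces the two triangles $\{3,4,30\}$ and $\{7,33,34\}$; $5H=\{5,13,18,19,24,32\}$ induces the matching $\{5\text{--}32,13\text{--}24,18\text{--}19\}$; $2H,6H,9H$ are independent; $3H$ has no edges to $9H$; $2H\text{--}3H$, $2H\text{--}5H$, $2H\text{--}6H$, $2H\text{--}H$ are perfect matchings; $2H\text{--}9H$ is a $12$-cycle (so $2$-regular bipartite); and $5H\cup 6H$ is three disjoint $K_4-e$'s with the missing edges in $6H$ (e.g.\ $\{5,32,6,31\}$ with non-edge $6\text{--}31$). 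All correct. This coset decomposition is a nice observation that the paper does not use; the paper instead runs a ``heavy vertex'' counting argument and picks a distinguished heavy vertex with extremal properties.

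The problem is that the argument is not actually a proof. Having set up the coset structure and translated $0\in S$, everything hinges on the case analysis over $a_{2H}$ (and then over the remaining cosets). You describe the \emph{shape} of that analysis---``when $a_{2H}$ is large \ldots the residual candidate graph admits no independent set of the required size \ldots When $a_{2H}$ is small \ldots the total again falls short of $9$''---but you never carry it out, and you explicitly label it ``the main obstacle \ldots the bookkeeping \ldots Once this is done, $\alpha\le 9$ follows.'' That bookkeeping \emph{is} the proof. The top-level bounds you do state ($a_{3H}\le 2$, $a_{5H}\le 3$, and $a_{5H}+a_{6H}\le 6$ from the three $K_4-e$'s) are nowhere near enough on their own: with $a_{2H}\le 6$ and $a_{9H}\le 6$ they permit sums far exceeding $9$. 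Ruling out $|S|=10$ requires combining the inter-coset matchings and $2$-regular bipartite graphs case by case, and it is not obvious without doing it that the cases all close (your ``say $\ge 3$'' threshold, for example, is a guess---for $a_{2H}=3$ or $4$ the naive counts of forbidden vertices leave a lot of slack and the argument must lean hard on the finer structure). In its present form the proposal establishes correct structure and a plausible plan but does not establish $\alpha(\Gamma(\ZZ_{37},\{1,10,11\}))\le 9$, so the proposition is not proved.
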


\begin{proof}
We will prove that the graph $\Gamma(\ZZ_{37},\{1,10,11\})$ has no independent set of size ten,
which implies that the $37$-vertex graph $\Gamma(\ZZ_{37},\{1,10,11\})$ is not $4$-colorable.
Suppose that the graph $\Gamma(\ZZ_{37},\{1,10,11\})$ has an independent set $A$ of size (exactly) ten and
fix such set $A$ for the rest of the proof.

We say that a vertex $x\not\in A$ is \emph{heavy} if $x$ has three neighbors in $A$.
Let $k$ be the number of heavy vertices.
Assign $3$ tokens to each vertex $x\not\in A$ that is heavy and
$2$ tokens to each vertex $x\not\in A$ that is not heavy.
The number of tokens assigned is $54+k$.
Each vertex $x\not\in A$ now sends one token to each neighbor that is in $A$.
In this way, the number of tokens left at each vertex not contained in the set $A$ is non-negative and
each vertex in the set $A$ receives six tokens.
Hence, $k$ must be at least six and so there are at least six heavy vertices.

\begin{figure}
\begin{center}
\epsfbox{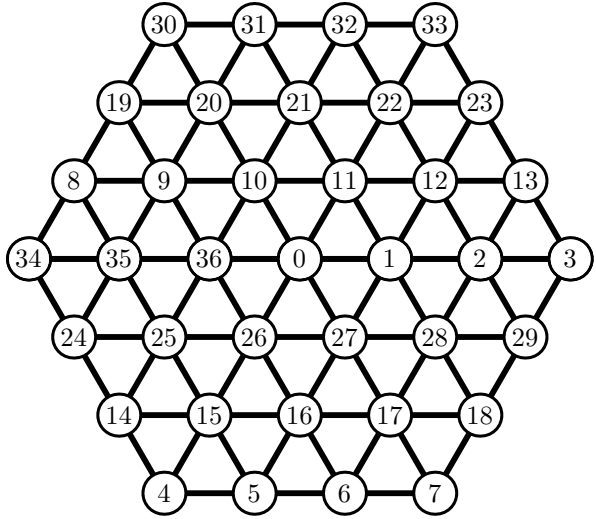}
\end{center}
\caption{The $3$-neighborhood of the vertex $0$ in the graph $\Gamma(\ZZ_{37},\{1,10,11\})$.}
\label{fig:Z37neighborhood}
\end{figure}

Let $z$ be a heavy vertex such that
\begin{itemize}
\item if possible, there is another heavy vertex at distance exactly two from $z$ that has exactly one common neighbor with $z$,
\item if the above is not possible, there is another heavy vertex at distance exactly two from $z$ (and so it has exactly two common neighbors with $z$), and
\item if neither of the above is possible, then choose $z$ to be any heavy vertex.
\end{itemize}
By symmetry of the graph $\Gamma(\ZZ_{37},\{1,10,11\})$,
we may assume that $z=0$ and that the vertices $1$, $10$ and $26$ are contained in the set $A$;
we refer to Figure~\ref{fig:Z37neighborhood} for the structure of the graph $\Gamma(\ZZ_{37},\{1,10,11\})$
on vertices at distance at most three from the vertex $z=0$.
Note that each vertex of $\Gamma(\ZZ_{37},\{1,10,11\})$ is at distance at most three from the vertex $0$.
Since there are at least six heavy vertices,
there must be a heavy vertex at distance at least two from $0$ (otherwise,
only the vertices $0$, $11$, $27$ and $36$ would be heavy).
Let $x$ be a heavy vertex at distance two from $0$ that has the least number of common neighbors with $0$, and
if there is no heavy vertex at distance two from $0$,
let $x$ be any heavy vertex at distance three from $0$.
By symmetry of the graph $\Gamma(\ZZ_{37},\{1,10,11\})$,
we may assume that $x$ is one of the vertices $2$, $12$ and $22$ if its distance from $0$ is two, and
it is one of the vertices $3$, $13$, $23$ and $33$ if its distance from $0$ is three.
However, neither the vertex $13$ nor the vertex $22$ can be heavy as
they have two common neighbors with the vertex $1$, which is in the independent set $A$.
Hence, we can assume in the rest that $x$ is $2$, $12$, $3$, $23$ or $33$.

\begin{figure}
\begin{center}
\epsfbox{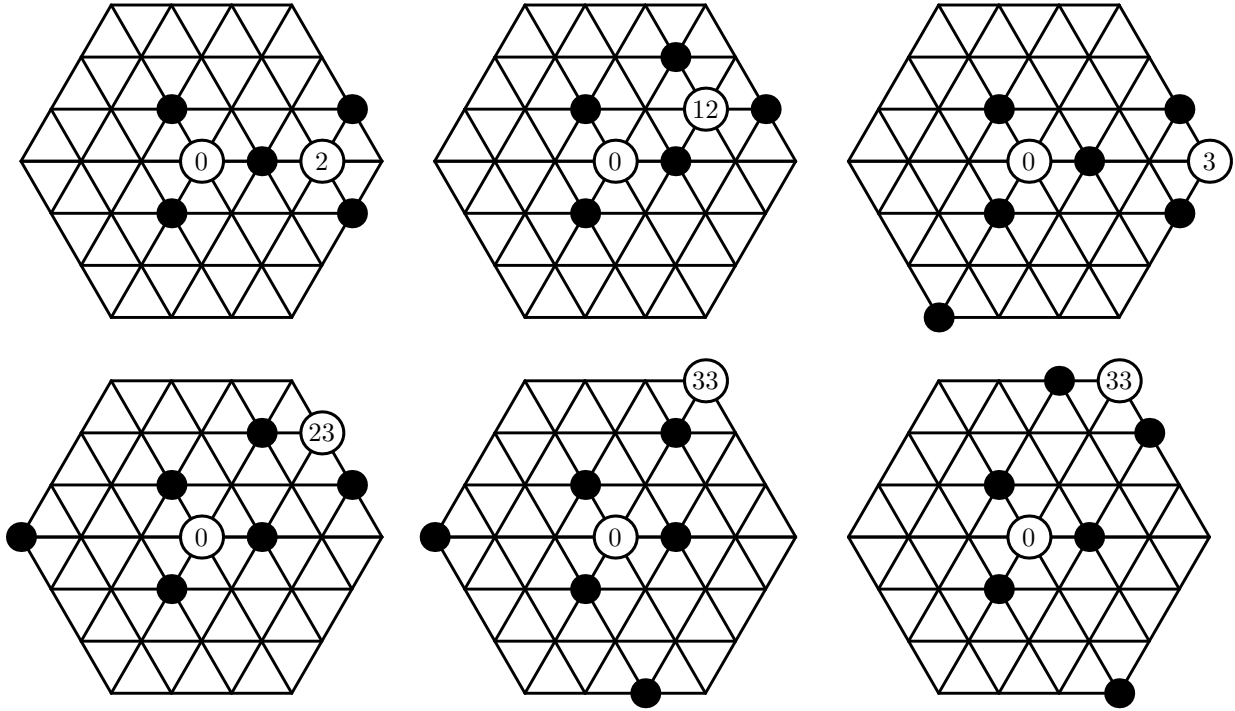}
\end{center}
\caption{Possible configurations of vertices included in an independent set in the graph $\Gamma(\ZZ_{37},\{1,10,11\})$ as
         analyzed in the proof of Proposition~\ref{prop:color}.
         The vertices in the independent set are drawn by full circles and
	 those with three neighbors in the set by empty circles.}
\label{fig:Z37configs}
\end{figure}

Possible configurations around the vertex $x$ are depicted in Figure~\ref{fig:Z37configs}.
Note that if $x$ is $3$ or $23$,
then the vertex $2$ or the vertex $12$, respectively, is heavy,
which is impossible by the choice of $x$ (we would have chosen $x$ to be a heavy vertex at distance two from $0$
if such a vertex existed).
Moreover, if $x=33$ and the fifth configuration in Figure~\ref{fig:Z37configs} applied,
the graph $\Gamma(\ZZ_{37},\{1,10,11\})$ would have two heavy vertices at distance two (specifically,
the vertices $11$ and $33$),
which is impossible by the choice of the vertex $z$ (we should have chosen $z$ to be $11$ or $33$).
Hence, we can assume that the first, second or last configuration depicted in Figure~\ref{fig:Z37configs} applies.

\begin{figure}
\begin{center}
\epsfbox{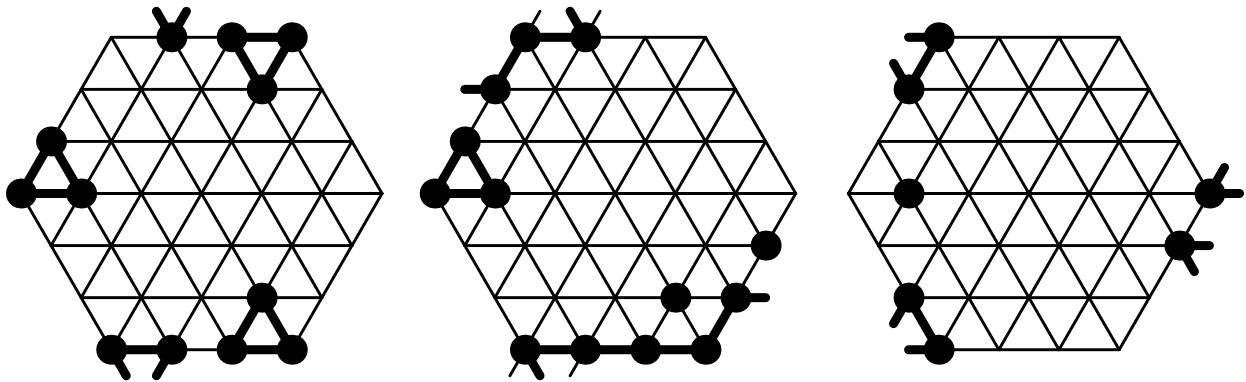}
\end{center}
\caption{The set $B$ from the proof of Proposition~\ref{prop:color}
         when the first, second or last configuration depicted in Figure~\ref{fig:Z37configs} applies (in this order).
	 The vertices of the set $B$ are depicted by bold circles;
	 the edges of the triangles and the cycle of length eight considered
	 in the proof are drawn in bold.}
\label{fig:Z37B}
\end{figure}

Let $A'$ be the set of the vertices of $A$ adjacent to $0$ or $x$, and
let $B$ be the set of all vertices of the graph $\Gamma(\ZZ_{37},\{1,10,11\})$ that
are not contained in $A'$ or adjacent to a vertex of $A'$,
i.e., the set $B$ is the following set in first, second or last configurations, respectively (also see Figure~\ref{fig:Z37B}):
\begin{align*}
B & = \{4, 5, 6, 7, 8, 17, 22, 31, 32, 33, 34, 35\}, \\
B & = \{4, 5, 6, 7, 8, 17, 18, 19, 29, 30, 31, 34, 35\}, \mbox{and} \\
B & = \{3, 4, 14, 19, 29, 30, 35\}.
\end{align*}
Let $H$ be the subgraph of $\Gamma(\ZZ_{37},\{1,10,11\})$ induced by $B$ and
note that $H$ has independence number at least $|A\cap B|$,
i.e., at least $5$ if the first or second configuration applies, and
at least $4$ if the last configuration applies.
However,
if the first configuration applies,
each of the sets $\{4,5,31\}$, $\{6,7,17\}$, $\{8,34,35\}$ and $\{22,32,33\}$
induces a triangle in $H$ and so the independence number of $H$ is at most $4$.
Similarly,
if the last configuration applies,
each of the sets $\{3,4,14\}$ and $\{19,29,30\}$ induces a triangle in $H$ and
since $H$ has seven vertices only,
its independence number is at most $3$.

It remains to analyze the second configuration in Figure~\ref{fig:Z37B}.
By the choice of the vertex $x$, the vertex $2$ is not heavy and so $29\not\in A$.
Similarly, the vertex $27$ is not heavy (otherwise, $12$ and $27$ are heavy vertices
at distance two with exactly one common neighbor), and so $17\not\in A$.
Let $B'=B\setminus\{17,29\}$ and note that $|A\cap B'|\ge 5$.
Observe that the set $\{8,34,35\}$ induces a triangle in $H'$ and
the remaining vertices $4$, $5$, $6$, $7$, $18$, $19$, $30$ and $31$ form a cycle (the vertices are listed
in the order along the cycle).
Since the vertices $4$ and $30$ are adjacent and the vertices $5$ and $31$ are adjacent,
any independent set can contain at most three vertices of the cycle formed by these eight vertices.
It follows that $|A\cap B'|\le 4$ and so the second configuration cannot apply.
\end{proof}

\section{Conclusion}
\label{sec:concl}

\begin{figure}
\begin{center}
\epsfbox{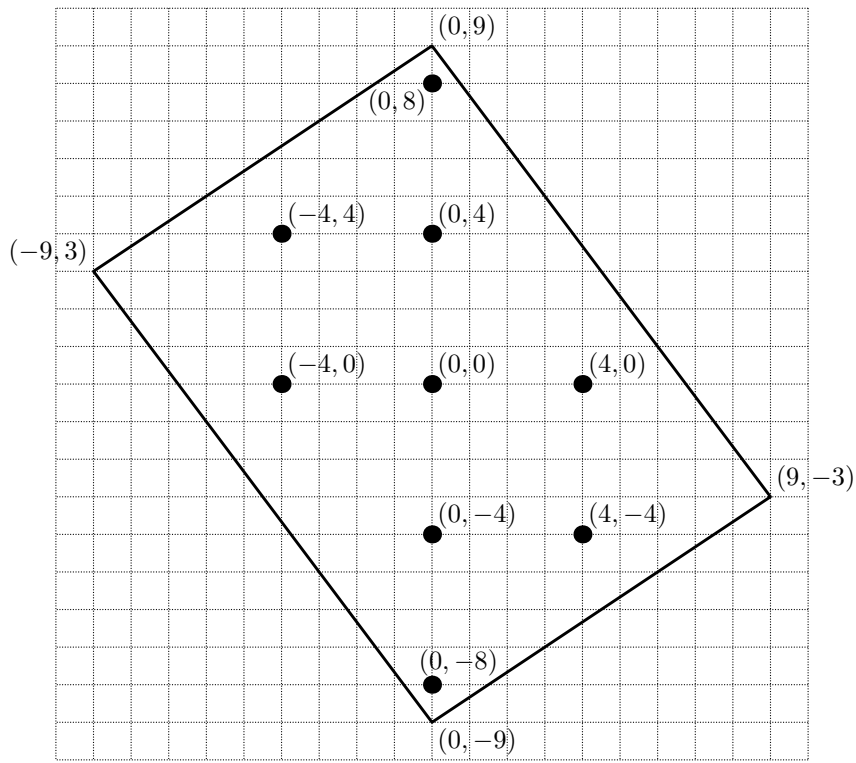}
\end{center}
\caption{A convex set satisfies necessary conditions on being $P(G)$ for a graph $G$ embedded in the torus that
         has representativity $9$ .}
\label{fig:method9}
\end{figure}

Unfortunately,
the techniques used in Sections~\ref{sec:polygon} and~\ref{sec:main}
do not seem to suffice for improving the bound on the representativity in Theorem~\ref{thm:main}
unless a new way of getting insights linking the structure of a graph $G$ and the set $P(G)$ is found.
Indeed, consider the set $X\subseteq\RR^2$ given by the following four linear constraints:
\begin{align*}
4x+3y & \le 27 \\
-2x+3y & \le 27 \\
-4x-3y & \le 27 \\
2x-3y & \le 27
\end{align*}
The set $X$ is visualized in Figure~\ref{fig:method9}.
It is not clear how to rule out that this set
is the set $P(G)$ for an Eulerian triangulation $G$ of the torus with representativity $9$ as
the value of \eqref{eq:maxP} for all non-zero $(m,n)\in\ZZ^2$ is at least $9$.
On the other hand, the set $X$ does not contain any four integer points that
would imply the conclusion of Theorem~\ref{thm:polygon}.
At the same time,
it seems tricky to relate the structure of a possible Eulerian triangulation $G$ to the information encoded by the shape of $P(G)$
in a way that would establish $4$-colorability of $G$
without cutting along four disjoint cycles as in the proof of Theorem~\ref{thm:main}.

Still, we believe that the lower bound given in Section~\ref{sec:lower} is tight, and
so we conjecture that Theorem~\ref{thm:main} can be improved as follows.

\begin{conjecture}
\label{conj}
Every Eulerian triangulation of the torus with representativity at least $8$ is $4$-colorable.
\end{conjecture}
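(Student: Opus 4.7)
The plan is to follow the overall architecture of the proof of Theorem~\ref{thm:main} but to tighten both the polygon analysis of Section~\ref{sec:polygon} and the cylinder recoloring of Section~\ref{sec:main}. Applying Theorem~\ref{thm:polar} with $r(G)\ge 8$ only guarantees, after a suitable reparameterization of the torus, that $P(G)$ contains the point $(0,6)$ together with a point $(8,z)$ for some $z\in[-4,4]$, a point $(x,-8)$ for some $x\in\RR$, and a point $(t,8-t)$ for some $t\in\RR$. The first step is to prove a replacement for Lemma~\ref{lm:polygon} that extracts from these data either three pairwise disjoint cycles in each of four well-chosen homotopy classes $h_1$, $h_2$, $h_1+h_2$, $h_1-h_2$, or else an asymmetric configuration (for instance, three disjoint cycles in some of these classes and four in others) that still feeds the argument of Section~\ref{sec:main}.

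The second step is to re-examine Table~\ref{tab:fpi} and observe that strictly fewer than four separating cycles suffice when the gluing permutation $\pi\in S_3$ has order less than three. If $\pi$ is the identity then no recoloring is required and $G$ inherits a proper $3$-coloring from $G'$; if $\pi$ is a transposition, one checks that three transitions among color-swapping functions $f_i$ already produce a valid $4$-coloring (e.g. via the sequence $(0,1,2)\to(0,3,2)\to(1,3,2)\to(1,0,2)$ for $\pi=(01)$), corresponding to only three separating cycles. Only when $\pi$ is a $3$-cycle does the argument genuinely require four cycles of identity type, since the order of $\pi$ lower-bounds the number of admissible transitions. Consequently, if the refined polygon lemma delivers three disjoint cycles per class, it already disposes of every case where the image of $\tau$ in $S_3$ has order at most two.

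The hardest case is when $\tau$ is surjective onto a copy of $\ZZ/3\ZZ$ inside $S_3$, so that all three candidate homotopy classes giving identity type demand four vertex-disjoint cycles while $P(G)$ has a pathological shape of the kind exhibited in Figure~\ref{fig:method9}. To handle this, the plan is to establish a structural dichotomy: either $G$ obeys additional linear constraints beyond those used in Theorem~\ref{thm:polar} that force the missing integer lattice points into $P(G)$ (for instance via extra short non-contractible cycles guaranteed by the abundance of triangles in an Eulerian triangulation), or $G$ is forced to be highly symmetric and resembles the extremal example $\Gamma(\ZZ_{37},\{1,10,11\})$. In the latter case one would combine Altshuler's classification~\cite{Alt73} of $6$-regular toroidal triangulations with the Yeh--Zhu classification~\cite{YehZ03} of their chromatic numbers and with Thomassen's structure theorem~\cite{Tho94} to rule out counterexamples with representativity at least $8$, possibly after reducing to the $6$-regular case by a local degree-smoothing argument.

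The main obstacle, as Section~\ref{sec:concl} makes explicit, is precisely the set $X$ of Figure~\ref{fig:method9}: there exist convex symmetric bodies in $\RR^2$ compatible with representativity $9$ (and plausibly $8$) that contain none of the integer quadruples exploited in the proof of Theorem~\ref{thm:polygon}. Any proof of Conjecture~\ref{conj} must therefore feed genuinely graph-theoretic information about $G$ back into the polytope analysis rather than relying on $r(G)$ alone. The most plausible such input is a stability statement of the form: an Eulerian triangulation of the torus whose Schrijver polytope is close to the $X$-shape must itself be structurally close to a member of a very short, explicit list of candidates, each of which can then be inspected by hand. Proving such a stability result is, in my view, where the bulk of the work in resolving Conjecture~\ref{conj} will lie.
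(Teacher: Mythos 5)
This statement is Conjecture~\ref{conj}: the paper explicitly leaves it open, so there is no proof of record to compare your attempt against. What you have written is, by your own admission, a research plan rather than a proof, and the decisive step is missing.

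Your second paragraph contains a genuine and correct refinement of the machinery of Section~\ref{sec:main}. Viewing the transition $f_{i-1}\to f_i$ through the compatibility condition (the sets $\{f_{i-1}(k),f_i(k)\}$ pairwise disjoint for distinct $k$), one checks that each step can change the value at only one $k$, and must move it to the currently unused fourth color; equivalently the sequence $f_0,\dots,f_n$ corresponds to a word in transpositions $(k\,3)\in S_4$ whose product fixes $3$ and restricts to $\pi$ on $\{0,1,2\}$. This gives a minimum of $1$ separating cycle when $\pi$ is the identity, $3$ when $\pi$ is a transposition (your explicit chain $(0,1,2)\to(0,3,2)\to(1,3,2)\to(1,0,2)$ is a valid witness), and $4$ when $\pi$ is a $3$-cycle. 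Combined with the observation that the set $X$ of Figure~\ref{fig:method9} already contains $(0,3)$, $(3,0)$, $(3,3)$, $(3,-3)$, this disposes of all cases where the image of $\tau$ has order at most $2$ --- a real simplification worth recording.

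The gap is precisely the remaining case, where $\tau$ maps onto the cyclic subgroup of order $3$ in $S_3$ and all three candidate identity-type classes demand four vertex-disjoint cycles. This is exactly the obstruction Section~\ref{sec:concl} points to: the polygon $P(G)$ can have a shape like $X$ that is compatible with representativity $9$ (and plausibly $8$ after rescaling) yet contains none of the integer quadruples needed for Theorem~\ref{thm:Sch}. Your proposed remedy --- a ``structural dichotomy'' or ``stability result'' that either forces extra lattice points into $P(G)$ from additional graph-theoretic constraints, or classifies the near-extremal triangulations via Altshuler/Yeh--Zhu/Thomassen --- is a reasonable direction, but it is stated at the level of intention rather than argument. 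In particular, the suggested reduction to the $6$-regular case by ``local degree-smoothing'' is not substantiated and is unlikely to be routine: degree modifications change both the Eulerian condition and the cycle space, and the known classifications of $6$-regular toroidal triangulations do not obviously control the Schrijver polytope of a general Eulerian triangulation. Until this case is handled, the proposal does not prove Conjecture~\ref{conj}; it correctly locates the hard step but leaves it open, consistent with the paper's own assessment.
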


\section*{Acknowledgement}

The authors would like to thank Grzegorz Gu\'spiel and Magdalena Prorok
for many detailed discussions concerning Eulerian triangulations of the torus and their properties.

\bibliographystyle{bibstyle}
\bibliography{torus10}

\end{document}